\newcommand{\rt}{\longrightarrow}
\newcommand{\lrt}{\longrightarrow}
\newcommand{\st}{\stackrel}
\newcommand{\La}{\Lambda}
\newcommand{\Ga}{\Gamma}
\newcommand{\Om}{\Omega}
\newcommand{\D}{\mathbb{D} }
\newcommand{\N}{\mathbb{N} }
\newcommand{\Z}{\mathbb{Z} }
\newcommand{\CA}{\mathcal{A} }
\newcommand{\CC}{\mathcal{C} }
\newcommand{\CI}{\mathcal{I} }
\newcommand{\CM}{\mathcal{M} }
\newcommand{\CP}{\mathcal{P} }
\newcommand{\CS}{\mathcal{S} }
\newcommand{\CX}{\mathcal{X} }
\newcommand{\mmod}{{\rm{{mod\mbox{-}}}}}
\newcommand{\prj}{{\rm{prj}\mbox{-}}}
\newcommand{\Gprj}{{\Gp\mbox{-}}}
\newcommand{\Ginj}{{\Gi \mbox{-}}}
\newcommand{\op}{{\rm{op}}}
\newcommand{\add}{{\rm{add}\mbox{-}}}
\newcommand{\pd}{{\rm{pd}}}
\newcommand{\id}{{\rm{id}}}
\newcommand{\gldim}{{\rm{gl.dim}}}
\newcommand{\domdim}{{\rm{dom.dim}}}
\newcommand{\nGprj}{{n\Z\mbox{-}\Gprj}}
\newcommand{\nGinj}{{n\Z\mbox{-}\Ginj}}
\newcommand{\Gp}{{\rm{Gprj}}}
\newcommand{\Gi}{{\rm{Ginj}}}
\newcommand{\Coker}{{\rm{Coker}}}
\newcommand{\Ker}{{\rm{Ker}}}
\newcommand{\Hom}{{\rm{Hom}}}
\newcommand{\Ext}{{\rm{Ext}}}
\theoremstyle{plain}
\newtheorem{theorem}{Theorem}[section]
\newtheorem{corollary}[theorem]{Corollary}
\newtheorem{lemma}[theorem]{Lemma}
\newtheorem{proposition}[theorem]{Proposition}
\theoremstyle{definition}
\newtheorem{definition}[theorem]{Definition}
\newtheorem{remark}[theorem]{Remark}
\theoremstyle{plain}
\theoremstyle{definition}
\numberwithin{equation}{section}
\begin{document}

\title[$n\Z$-Gorenstein cluster tilting subcategories]{$n\Z$-Gorenstein cluster tilting subcategories}

\author[Javad Asadollahi, Rasool Hafezi and Somayeh Sadeghi]{Javad Asadollahi, Rasool Hafezi and Somayeh Sadeghi}

\address{Department of Mathematics, University of Isfahan, P.O.Box: 81746-73441, Isfahan, Iran}
\email{asadollahi@ipm.ir, asadollahi@sci.ui.ac.ir,}

\address{School of Mathematics, Institute for Research in Fundamental Sciences (IPM), P.O.Box: 19395-5746, Tehran, Iran}
\email{hafezi@ipm.ir}

\address{Department of Mathematics, University of Isfahan, P.O.Box: 81746-73441, Isfahan, Iran}
\email{so.sadeghi@sci.ui.ac.ir }

\subjclass[2010]{18E10, 18E30, 16E65, 16G10,  18E99}

\keywords{Artin algebra, Cluster tilting subcategory, n-abelian category, Gorenstein projective module}


\begin{abstract}
Let $\La$ be an artin algebra. In this paper, the notion of $n\Z$-Gorenstein cluster tilting subcategories will be introduced. It is shown that every $n\Z$-cluster tilting subcategory of $\mmod\La$ is $n\Z$-Gorenstein if and only if $\La$ is an Iwanaga-Gorenstein algebra. Moreover, it will be shown that an $n\Z$-Gorenstein cluster tilting subcategory of $\mmod\La$ is an $n\Z$-cluster tilting subcategory of the exact category $\Gprj\La$, the subcategory of all Gorenstein projective objects of $\mmod\La$. Some basic properties of $n\Z$-Gorenstein cluster tilting subcategories will be studied. In particular, we show that they are $n$-resolving, a higher version of resolving subcategories.
\end{abstract}

\maketitle
\section{Introduction}
Throughout $R$ is a fixed commutative artinian ring. An artin $R$-algebra $\Ga$ is called of finite representation type if the set of iso-classes of finitely generated indecomposable $\Ga$-modules is finite. $\Ga$ is called an Auslander algebra if it satisfies the following homological conditions
\[\gldim \Ga \leq 2 \leq \domdim \Ga,\]
where $\domdim \Ga$ denotes the dominant dimension of $\Ga$ introduced by Tachikawa \cite{Ta}.

One of the important results in the study of algebras of finite representation type, is due to Auslander \cite{Au}, proving that here is a bijective correspondence between Morita equivalence classes of Artin algebras $\La$ of finite representation type and Morita equivalence classes of Auslander algebras.\\

A higher version of Auslander's correspondence and Auslander-Reiten theory for artin algebras and related rings is developed by Iyama in a series of papers, see e.g. \cite{I}, \cite{I1}. The new notion that made Iyama's theory fundamental in representation theory, is the notion of cluster tilting modules and cluster tilting subcategories.

Let $\La$ be an artin algebra and $\mmod\La$ denote the category of finitely generated right $\La$-modules.  Let $n$ be a positive integer. A full subcategory $\CC$ of $\mmod\La$ is called an $n$-cluster tilting subcategory if it is functorially finite and $\CC=\CC^{\perp_n}={}^{\perp_n}\CC$, where
\[ \CC^{\perp_n}:= \{X \in \mmod\La \mid \Ext^i_{\La}(\CC, X)=0, \ \text{ for all} \ 0 < i<  n \}, \]
\[ {}^{\perp_n}\CC:= \{X \in \mmod\La \mid \Ext^i_{\La}(X, \CC)=0, \ \text{ for all} \ 0 < i <  n \}. \]
$M \in \mmod\La$ is called a cluster tilting module if $\add M$ is a cluster tilting subcategory.

Although cluster tilting subcategories are not abelian, they have a nice structure, known as $n$-abelian structure. The notion of $n$-abelian categories introduced and studied by Jasso \cite{Ja}. These are categories inhabited by certain exact sequences with $n+2$ terms, called $n$-exact sequences. It follows that under sufficiently general circumstances, $n$-cluster tilting subcategories of abelian categories correspond bijectively with $n$-abelian categories \cite[Theorem 3.16]{Ja}. Homological properties of these subcategories have been studied by several authors, usually under the name of `higher homological algebras'.

So, comparing to the classical case, a natural attempt that would be of interest, is to develop a version of Gorenstein homological algebra in this new context.  This is the motivation for this work.

To introduce a Gorenstein version of cluster tilting subcategories, we concentrate on certain classes of cluster tilting subcategories, introduced recently by Iyama and Jasso \cite[Definition-Proposition 2.15]{IJ}, called $n\Z$-cluster tilting subcategories. Roughly speaking these are subcategories that are closed under $n$-syzygies and $n$-cosyzygies. We then introduce $n\Z$-Gorenstein projective objects as a higher analogue of Gorenstein projective objects in $\mmod\La$, see Definition \ref{DefnZGP}.

Let $\nGprj\CC$  denote the full subcategory of all $n\Z$-Gorenstein projective modules in $n\Z$-cluster tilting subcategory $\CC.$ As an immediate consequence of the definition, it follows that the category $\nGprj\CC$ is closed under finite direct sums. Moreover, there are inclusions of categories $\prj\La \subseteq \nGprj\CC \subseteq  \CC \cap \Gprj\La$, where $\prj\La$ and $\Gprj\La$ denote the full subcategories of finitely generated projective and finitely generated Gorenstein projective modules in $\mmod\La$, respectively. It will be shown that if $\La$ is an Iwanaga-Gorenstein algebra, the latter inclusion becomes equality.
Dually, the notion of $n\Z$-Gorenstein injective modules will be defined and will be denoted by $\nGinj\CC$.

We use these two classes to define $n\Z$-Gorenstein cluster tilting subcategories, see Definition \ref{DefnZGCTS}. We show that every $n\Z$-cluster tilting subcategory $\CC$ of $\mmod\La$ is $n\Z$-Gorenstein cluster tilting subcategory if and only if $\La$ is an Iwanaga-Gorenstein algebra.

Considering $\Gprj\La$ as an exact category, it will be shown that $\nGprj\CC$ is an $n\Z$-cluster tilting subcategory of $\Gprj\La$. Moreover, we prove the following theorem.\\

\noindent {\bf Theorem 1.}\label{NClusterTilting} Let $\La$ be an Iwanaga-Gorenstein algebra and $\CC$ be an $n\Z$-cluster tilting subcategory of $\mmod\La.$
Then we have
\begin{itemize}
\item [$(1)$] The subcategory  $\nGprj\CC$ is an $n\Z$-cluster tilting subcategory in the exact category $\Gprj\La.$
\item [$(2)$] The subcategory $\underline{\nGprj\CC}$ is an $n\Z$-cluster tilting subcategory in the triangulated category $\underline{\Gprj\La}.$		
\end{itemize}

As an immediate consequence of this theorem and having in mind that $\nGprj\CC$ and $\underline{\nGprj\CC}$ are stable under cosyzygies and the $n$-th power of the suspension functor of the triangulated category $\underline{\Gprj\La}$, respectively, it will follow that $\nGprj\CC$ is a Frobenius $n$-exact category and $\underline{\nGprj\CC}$ is an $(n+2)$-angulated category.

Then we introduce the concept of $n$-resolving subcategories, as a higher analogue of resolving subcategories in an abelian category. We show that when $\La$ is  an Iwagana-Gorenstein algebra then the subcategory $\nGprj\CC$ of $n\Z$-cluster tilting subcategory $\CC$  is an  $n$-resolving subcategory of $\CC$.

Finally, we finish the paper with an observation which can be thought of as a higher version of an equivalence proved by Buchweitz, Happel and Rickard, see e.g. \cite{Bu}, \cite{Hap} and \cite{Ri}. They proved that $\underline{\Gprj\La}$ and  the quotient category $\D^b(\La ) / \D^{\text{perf}} (\La)$ are triangulated equivalent, whenever $\La$ is an Iwanaga-Gorenstein algebra. Here $\D^b (\La)$ denotes the bounded derived category of finitely generated $\La$-modules. Furthermore, $\D^{\text{perf}} (\La)$ denotes the thick subcategory of $\D^b (\La)$ consisting of objects isomorphic to bounded complexes of finitely generated projective $\La$-modules. We show that if $\La$ is an Iwanaga-Gorenstein algebra, then  the subcategory $\underline{\nGprj\CC}$ of $\underline{\Gprj\La}$ is equivalent to the subcategory
\[\Upsilon=\{P^{\bullet} \in \D^b(\La) / \D^{\text{perf}} (\La)\mid P^{\bullet}\simeq C[ni] \ \text{for some} \ C \in \CC \ \text{and} \ i \in \Z \},\]
as (n+2)-angulated categories.\\

We end the introduction by mentioning that existence of cluster tilting objects or subcategories in an abelian category is not so frequent. In practice, it is more easy and more frequent, and in some cases more important, to find cluster tilting objects or subcategories in exact categories. For instance it is of interest to study cluster-tilting objects or subcategories of Gorenstein-projective objects, see \cite{K}. In this case the theory is richer since it is supported by the well-developed theory of cluster-tilting objects or subcategories of the stable category of the Gorenstein-projectives modulo projectives.\\

\noindent {\bf Conventions.} Throughout $R$ is a commutative artinian ring, $\La$ is an artin $R$-algebra,  $\mmod\La$ denotes the category of finitely generated (right) $\La$-modules, $\CC$ is a full subcategory of $\mmod\La$ and $n$ is a positive integer.  For $M \in \mmod\La$, $\add M$ denotes the full subcategory of $\mmod\La$ consisting of all direct summands of finite direct sums of copies of $M$.

\section{Cluster tilting subcategories}\label{section2}
$n$-cluster tilting modules and subcategories play the central role in the theory of higher homological algebra. They can be considered as a higher analog of the module category. Let us recall the definition, see \cite{I} and \cite{IJ}.

\begin{definition}
A subcategory $\CC$ of $\mmod\La$ is called an $n$-cluster tilting subcategory if it is functorially finite and $\CC=\CC^{\perp_n}={}^{\perp_n}\CC$, where
\[ \CC^{\perp_n}:= \{X \in \mmod\La \mid \Ext^i_{\La}(\CC, X)=0 \ \text{ for all} \ 0 < i<  n \}, \]
\[ {}^{\perp_n}\CC:= \{X \in \mmod\La \mid \Ext^i_{\La}(X, \CC)=0 \ \text{ for all} \ 0 < i <  n \}. \]
\end{definition}

It is known that for an arbitrary module $M \in \mmod\La$, $\rm{add}\mbox{-}M$ is always functorially finite. So $\add M$ is an $n$-cluster tilting subcategory if and only if $\add M=(\add M)^{\perp_n}={}^{\perp_n}(\add M)$. In this case, $M$ is called an $n$-cluster tilting module.

Similarly, Jasso in \cite{Ja} defined the notion of an $n$-cluster tilting subcategory of an exact category. We refer the reader to the Definition 4.13 of \cite{Ja} for details.

According to \cite[Page 343]{IJ}, a subclass of $n$-cluster-tilting subcategories, called $n\Z$-cluster tilting subcategories, ``are better behaved from the viewpoint of higher homological algebra''. Throughout the paper we shall concentrate on this special class. Let us recall the definition.

We say that an $n$-abelian category $\CC$ has $n$-syzygies if for every $M \in \CC$ there exists an $n$-exact sequence
$$0\rt  L\rt   P^{n-1} \rt \cdots \rt  P^0 \rt M\rt 0	$$
in $\CC$, where $P^i$ is a projective object,  for $i \in \{0, 1,  \cdots, n-1\}$.  By abuse of notation, $L$ is called the $n$-syzygy of $M$ and denoted by $\Om^nM$. Dually, the notion of $n$-cosyzygies and $\CC$ having $n$-cosyzygies are defined. The $n$-cosyzygy of $M$ is denoted by $\Om^{-n}M$, see Definition 2.22 of \cite{IJ}.

\begin{theorem}(See \cite[Definition-Proposition  2.15]{IJ})
Let $\CC$ be an $n$-cluster tilting subcategory of $\mmod\La$. The following conditions are equivalent.
\begin{itemize}
\item [$(a)$] $\Ext_{\La}^i(\CC, \CC)= 0$, for all $i \notin n\Z.$
\item [$(b)$] $\Om^n(\CC) \subseteq \CC.$
\item [$(c)$] $\Om^{-n}(\CC) \subset \CC.$
\item [$(d)$] For each $X \in \CC$ and for each
\[ 0\rt   L\rt  M^1\rt \cdots \rt  M^n\rt N\rt  0	\]
exact sequence in $\mmod \La$ whose terms lie in $\CC$ there is an exact sequence
\[\begin{tikzcd}[column sep=tiny, row sep=tiny]
0\rar&\Hom_{\La}(X,L)\rar&\Hom_{\La}(X,M^1)\rar&\cdots\rar&\Hom_{\La}(X,M^n)\rar&\Hom_{\La}(X,N)\rar&{}\\     {}\rar&\Ext_{\La}^n(X,L)\rar&\Ext_{\La}^n(X,M^1)\rar&\cdots\rar&\Ext_{\La}^n(X,M^n)\rar&\Ext_{\La}^n(X,N)\rar&{}\\
{}\rar&\Ext_{\La}^{2n}(X,L)\rar&\Ext_{\La}^{2n}(X,M^1)\rar&\cdots\rar&\Ext_{\La}^{2n}(X,M^n)\rar&\Ext_{\La}^{2n}(X,N)\rar&\cdots.
\end{tikzcd}\]
\item [$(e)$]  For each $X\in\CC$ and for each
\[
0\rt L\rt M^1\rt \cdots\rt M^n\rt N\rt0
\]
exact sequence in $\mmod \La$ whose terms lie in $\CC$ there is an exact sequence
\[\begin{tikzcd}[column sep=tiny, row sep=tiny]
0\rar&\Hom_{\La}(N,X)\rar&\Hom_{\La}(M^n,X)\rar&\cdots\rar&\Hom_{\La}(M^1,X)\rar&\Hom_{\La}(L,X)\rar&{}\\        {}\rar&\Ext_{\La}^n(N,X)\rar&\Ext_{\La}^n(M^n,X)\rar&\cdots\rar&\Ext_{\La}^n(M^1,X)\rar&\Ext_{\La}^n(L,X)\rar&{}\\
{}\rar&\Ext_{\La}^{2n}(N,X)\rar&\Ext_{\La}^{2n}(M^n,X)\rar&\cdots\rar&\Ext_{\La}^{2n}(M^1,X)\rar&\Ext_{\La}^{2n}(L,X)\rar&\cdots.
\end{tikzcd}\]
\end{itemize}
\end{theorem}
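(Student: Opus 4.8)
The plan is to organize the five conditions into a clean core, $(a)\Leftrightarrow(b)\Leftrightarrow(c)$, proved by dimension shifting, together with two ``unpacked'' reformulations $(d),(e)$ whose equivalence with $(a)$ I would establish through a hyper-$\Ext$ spectral sequence. Note first that, since $\CC$ is $n$-cluster tilting, the vanishing $\Ext^i_{\La}(\CC,\CC)=0$ for $0<i<n$ is automatic, so $(a)$ only adds vanishing in the bands $kn<i<(k+1)n$ for $k\ge 1$. For $(a)\Rightarrow(b)$ I would fix $M\in\CC$, take a projective resolution, and use the dimension-shift isomorphism $\Ext^i_{\La}(\Om^nM,C)\cong\Ext^{i+n}_{\La}(M,C)$ valid for $i\ge 1$; since $n<i+n<2n$ when $0<i<n$, condition $(a)$ kills the right-hand side, so $\Om^nM\in{}^{\perp_n}\CC=\CC$. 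Conversely $(b)$ iterates to $\Om^{kn}(\CC)\subseteq\CC$, and writing $i=kn+r$ with $0<r<n$ gives $\Ext^i_{\La}(M,C)\cong\Ext^r_{\La}(\Om^{kn}M,C)=0$, which is $(a)$. The equivalence $(a)\Leftrightarrow(c)$ is the exact dual, using injective coresolutions, the cosyzygy shift $\Ext^i_{\La}(C,\Om^{-n}M)\cong\Ext^{i+n}_{\La}(C,M)$, and $\CC=\CC^{\perp_n}$.

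For $(a)\Rightarrow(d)$ I would regard the given exact sequence as an acyclic complex $D^\bu=(L\rt M^1\rt\cdots\rt M^n\rt N)$ concentrated in cohomological degrees $0,\dots,n+1$, so that $\mathbf{R}\Hom_{\La}(X,D^\bu)\simeq 0$. The associated hyper-$\Ext$ spectral sequence reads $E_1^{p,q}=\Ext^q_{\La}(X,D^p)\Rightarrow 0$. Because $X$ and every $D^p$ lie in $\CC$, condition $(a)$ forces $E_1^{p,q}=0$ whenever $q\notin n\Z$, so the nonzero rows sit only at $q=0,n,2n,\dots$. A differential $d_r$ has bidegree $(r,1-r)$, hence can connect two nonzero rows only when $r\equiv 1\pmod n$; since $0\le p\le n+1$, the only survivors are the horizontal $d_1$ and the family $d_{n+1}\colon E_{n+1}^{0,(k+1)n}\rt E_{n+1}^{n+1,kn}$ running from the column of $L$ to the column of $N$ one band down. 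Convergence to $0$ then makes the $d_1$-rows $\Ext^{kn}_{\La}(X,L)\rt\cdots\rt\Ext^{kn}_{\La}(X,N)$ exact at the interior ($M^i$) spots and glues consecutive bands through the $d_{n+1}$-isomorphisms, which furnish the connecting maps $\Ext^{kn}_{\La}(X,N)\rt\Ext^{(k+1)n}_{\La}(X,L)$. This is precisely the long exact sequence of $(d)$; condition $(e)$ follows from the dual (contravariant) hyper-$\Ext$ spectral sequence.

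For the reverse implications $(d)\Rightarrow(a)$ and $(e)\Rightarrow(a)$ I would first reduce, via the dimension shift already used, to a single band: $\Om^n(\CC)\subseteq\CC$ (hence $(a)$) is equivalent to $\Ext^j_{\La}(\CC,\CC)=0$ for $n<j<2n$, because that band vanishing places $\Om^nM$ in ${}^{\perp_n}\CC=\CC$. To extract this from $(d)$ I would feed it non-degenerate $n$-exact sequences of $\CC$, which are available since $\CC$ is $n$-abelian in the sense of \cite{Ja}, and read off the vanishing on the interior terms from exactness of the displayed sequence at the spot just before a connecting map. One cannot simply use a projective $n$-syzygy sequence here, since membership of $\Om^nM$ in $\CC$ is exactly what is to be proved; the argument must stay inside $\CC$ and exploit that every object of $\CC$ occurs as an interior term of such a sequence.

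I expect this last step to be the main obstacle. A direct diagram chase (already visible for $n=2$) shows that exactness of the $(d)$-sequence at a single junction does not literally assert $\Ext^{n+s}_{\La}(X,M^i)=0$; rather it asserts that two images inside an intermediate $\Ext$-group of the spliced short exact sequences meet only in zero. Upgrading this to outright band vanishing requires varying the $n$-exact sequence over a family rich enough to realize every $C\in\CC$ as an interior term and to detect every extension class, and then assembling the resulting constraints, all without leaving $\CC$. Managing this bookkeeping, and verifying that the supply of $n$-exact sequences coming from the $n$-abelian structure is indeed rich enough, is where the real work lies; by comparison the core equivalence $(a)\Leftrightarrow(b)\Leftrightarrow(c)$ and the forward spectral-sequence arguments are routine.
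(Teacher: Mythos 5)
First, a point of comparison: the paper contains no proof of this statement at all --- it is quoted verbatim, with proof deferred to \cite[Definition-Proposition 2.15]{IJ} --- so your attempt can only be measured against the Iyama--Jasso argument and against internal correctness. Your core equivalence $(a)\Leftrightarrow(b)\Leftrightarrow(c)$ by dimension shifting is correct and is the standard argument (note only that you implicitly use $\prj\La\subseteq\CC$ and closedness of $\CC={}^{\perp_n}\CC$ under summands to make sense of $\Om^n(\CC)\subseteq\CC$). Your hyper-$\Ext$ spectral sequence proof of $(a)\Rightarrow(d),(e)$ is also correct, if heavier than the elementary splicing of the given exact sequence into short exact sequences: with nonzero rows only at $q\in n\Z$ and $0\le p\le n+1$, indeed only $d_1$ and $d_{n+1}$ can be nonzero, interior $E_2$-terms must vanish, and the forced isomorphisms $d_{n+1}\colon E_2^{0,(k+1)n}\rt E_2^{n+1,kn}$ invert to give the connecting maps $\Ext^{kn}_{\La}(X,N)\rt\Ext^{(k+1)n}_{\La}(X,L)$; this assembles exactly the sequence in $(d)$.

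The genuine gap is that $(d)\Rightarrow(a)$ and $(e)\Rightarrow(a)$ are never proved: your final two paragraphs reduce correctly to the single band $\Ext^j_{\La}(\CC,\CC)=0$ for $n<j<2n$, but then only describe a plan and explicitly concede that ``the real work lies'' in carrying it out. The difficulty you ran into is structural, and your proposed remedy --- feeding $(d)$ the $n$-exact sequences supplied by the $n$-abelian structure of $\CC$ from \cite{Ja} --- is aimed at the wrong target: the sequence in $(d)$ mentions only the groups $\Ext^{kn}_{\La}$, so its exactness constrains the off-band groups $\Ext^{kn+r}_{\La}$ $(0<r<n)$ only indirectly, through the factorization of the connecting homomorphisms across the intermediate kernels $Z^i$ of the chosen sequence; and the $n$-exact sequences of $\CC$ (which are $\Hom_{\La}(\CC,-)$- and $\Hom_{\La}(-,\CC)$-exact by definition) are precisely those whose obstructions are already invisible, so varying over them cannot be expected to detect the band. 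What is missing is the idea of manufacturing test sequences whose intermediate kernels realize the syzygies in question: for $M\in\CC$ one splices $0\rt\Om M\rt P_0\rt M\rt 0$ with a right $\CC$-resolution of $\Om M$ of length $n-1$ (which exists by \cite[Theorem 2.2.3]{I}, with kernels $K_j$ satisfying $\Ext^i_{\La}(\CC,K_j)=0$ for $1\le i\le j$) to obtain an $(n+2)$-term exact sequence with all terms in $\CC$, and then compares the exactness asserted by $(d)$, resp.\ $(e)$, with the unconditional long exact sequences of the constituent short exact sequences to pin down the off-band groups. Even granting your spectral-sequence forward direction, without an argument of this kind (and its dual) the proposal establishes only $(a)\Leftrightarrow(b)\Leftrightarrow(c)$ and $(a)\Rightarrow(d),(e)$, not the theorem.
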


An $n$-cluster tilting subcategory of $\mmod\La$ satisfying one, and hence all, of the above equivalent conditions, is called an $n\Z$-cluster tilting subcategory.

The notion of an $n$-cluster tilting subcategory, resp. $n\Z$-cluster tilting subcategory, of a triangulated category is implicit in \cite{KR}, \cite{I2} and \cite{GKO}. We use these notions throughout the paper without further references and refer the reader to the above mentioned references for definition and properties of these categories.

We also need the notions of $n$-exact and Frobenius $n$-exact categories. We refer the reader to \cite{Ja} for definition and propositions of $n$-exact categories and  just recall the definition of Frobenius $n$-exact categories.

 Let $(\CC,\CS)$ be an $n$-exact category. An object $I \in \CC$ is called $\CS$-injective if for every admissible monomorphism $f:A \rt B$, the sequence $\CC(B, I)\st{\CC(f, I)}\lrt \CC(A, I) \lrt 0$ is exact. Denote by $\CI$ the subcategory of $\CS$-injectives. We say that $(\CC, \CS)$ has enough $\CS$-injective if for every object $A \in \CC$, there exits an admissible $n$-exact sequence  $$A \rightarrowtail I^1 \rt I^2 \rt \cdots \rt I^n\twoheadrightarrow B$$ with $I^i \in \CI.$  The object $B$ does not depend on the choice of the $I^i$ in $\overline{\CC}$, so deonte it by $\mho^n(A)$. We can define the notion of $\CS$-projective and  having enough  $\CS$-projectives dually. Denote by $\CP$ the subcategory of $\CS$-projectives. We say that  $n$-exact category $(\CC, \CS)$ is Frobenius if it has enough $\CS$-injectives, enough $\CS$-projectives and $\CP=\CI$.\\
 Let $(\CC, \CS)$ be a Frobenius $n$-exact category. We define a class $\zeta=\zeta(\CS)$ of $n$-$\mho^n$-sequences in $\underline{\CC}$ as follows. Let $C^0 \st{f^0}\rt C^1$ be a morphism in $\CC$. Then, for every morphism of $n$-exact sequences of the form
 \[
 \begin{tikzcd}
 C^0\rar[tail]\dar{f^0}&I^1\rar\dar&\cdots\rar&I^n\rar[two heads]\dar&\mho^n(C^0)\dar[equals]\\
C^1\rar[tail]{f^1}&C^2\rar{f^2}&\cdots\rar{f^{n}}&C^{n+1}\rar[two
 heads]{f^{n+1}}&\mho^n(C^0)
 \end{tikzcd}
 \]
 the sequence
 \[
 \begin{tikzcd}
 C^0\rar{\underline{f^0}}&C^1\rar{\underline{f}^1}&C^2\rar{\underline{f}^2}&\cdots\rar{\underline{f}^n}&C^{n+1}\rar{\underline{f}^{n+1}}&\mho^n
 (C^0)
 \end{tikzcd}
 \]
 is called a \emph{standard $(n+2)$-angle}. An $n$-$\mho$-sequence
 $Y$ in $\underline{\CC}$ belongs to $\zeta$ if and only if it is isomorphic to a
 standard $(n+2)$-angle. In  \cite[Theorem 5.11]{Ja} is proved  that $(\underline{\CC}, \mho^n, \zeta(\CS))$ provides an $(n+2)$-angulated structure on $\underline{\CC}$, which is indeed a higher analog of \cite[Theorem I.2.6]{Ha}.

\section{$n\Z$-Gorenstein projective objects of $n\Z$-cluster tilting subcategories}	
Auslander and Bridger \cite{AB} introduced the class of modules of Gorenstein dimension zero as a natural generalization of the class of finitely generated projective modules over commutative noetherian rings. Encohs and Jenda \cite{EJ} generalized the notion to arbitrary modules over arbitrary rings called them Gorenstein projective modules. They also dualised the notion and introduced the class of Gorenstein injective modules. Based on these two classes of modules, they developed a
relative version of homological algebra, nowadays known as the Gorenstein homological algebra. It has been proved that these classes are worth to study, according to many applications they already have had.

Our aim in this section, is to introduce the notion of $n\Z$-Gorenstein projective objects in $n\Z$-cluster tilting subcategories. Throughout the section,  $\CC$ is an $n\Z$-cluster tilting subcategory of $\mmod \La.$

\begin{definition}
An acyclic complex
\[{\bf P}: \cdots \rt P^{-1} \rt P^0\rt P^1 \rt  \cdots 	\]
of finitely generated projective modules is called  $n\Z$-totally acyclic if
\begin{itemize}
\item [$(i)$] For every projective $\La$-module $Q$ and every exact sequence
\[{\bf Z}^i:\ \ 0 \rt Z^{in} \rt P^{in} \rt P^{in+1}\rt \cdots \rt P^{(i+1)n-1}\rt Z^{(i+1)n}\rt 0 ,\]
the induced complex $\Hom _\La({\bf Z}^i,Q)$ is acyclic, for all $i \in \Z$, where for every $j \in \Z, \ Z^j=\Ker(P^{j} \rt P^{j+1})$.
\item [$(ii)$] $Z^{in}\in \CC$, for all $i \in \Z$.
\end{itemize}
\end{definition}

Note that the above complex ${\bf P}$ is just a complete projective resolution in the language of \cite{EJ} with the extra assumption that all $ni$-cycles are in $\CC$.

\begin{definition}\label{DefnZGP}
A module $G$ in an $n\Z$-cluster tilting subcategory $\CC$ is called $n\Z$-Gorenstein projective if it is isomorphic to $Z^{in}$, for some $i \in \Z$ and  some $n\Z$-totally acyclic complex $\bf P$. We denote  by $\nGprj\CC$ the full subcategory of all $n\Z$-Gorenstein projective modules.
\end{definition}

It is clear that for $n=1$,  $\CC$ is just $\mmod\La$ itself and the subcategory $1\Z\mbox{-}\Gprj\mmod \La$ is nothing but the subcategory of all Gorenstein projective modules. We denote the subcategory of Gorenstein projective modules in $\mmod \La$  by $\Gprj\La$.

\begin{remark}
The notion of $n\Z$-Gorenstein injective modules in $\CC$ can be defined dually, and the full subcategory of all $n\Z$-Gorenstein injective modules in $\CC$ is  denoted by $n\Z\mbox{-}\Ginj\CC$. The subcategory $D(\CC)$ of $\mmod\La^{\op}$ is again an $n\Z$-cluster tilting subcategory and one can observe that
$\nGinj\CC\simeq \nGprj D(\CC)^{\rm{op}}$. So we restrict our study to the class of $n\Z$-Gorenstein projective modules.
\end{remark}

Following proposition collects some of the basic properties of $n\Z$-Gorenstein projective modules. The proof of the proposition is just based on the definition and is straightforward. So we skip the proof.

\begin{proposition}\label{Prop 3.6}
Let $\CC$ be an $n\Z$-cluster tilting subcategory and  $M \in \nGprj\CC$. Then the following properties hold.
\begin{itemize}
\item [$(i)$] $\prj\La \subseteq \nGprj\CC \subseteq  \CC \cap \Gprj\La$.
\item [$(ii)$] $\nGprj\CC$ is closed under finite direct sums.
\item [$(iii)$] For every $i \geqslant 1$ and every $P \in \prj\La,$ $\Ext^{in}_{\La}(M, P)=0$.
\item [$(iv)$] For every $i \geqslant 1$, $\Om^{in}(M) \in \nGprj\CC$.
\item [$(v)$] The $n$-the syzygy functor induces an equivalence $\Om^n: \underline{\nGprj}\CC \rt \underline{\nGprj}\CC$.	
\end{itemize}
\end{proposition}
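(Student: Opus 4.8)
The plan is to establish the five properties essentially by unwinding the definition of an $n\Z$-totally acyclic complex and invoking the fact that $\CC$ is $n\Z$-cluster tilting. Let $M \in \nGprj\CC$, so $M \cong Z^{in}$ for some $i$ and some $n\Z$-totally acyclic complex $\mathbf{P}$. For $(i)$, the inclusion $\nGprj\CC \subseteq \CC$ is immediate from condition $(ii)$ of the definition of $n\Z$-totally acyclic (all $Z^{jn}$ lie in $\CC$). For $M \in \Gprj\La$ one notes that $\mathbf{P}$ is in particular a complete projective resolution, so each $Z^{jn}$ is a syzygy in a totally acyclic complex and hence Gorenstein projective in the usual sense; thus $\nGprj\CC \subseteq \CC \cap \Gprj\La$. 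For $\prj\La \subseteq \nGprj\CC$, I would take a projective module $P$ and build the trivial $n\Z$-totally acyclic complex with $P$ appearing as a cycle: concatenate split short exact sequences so that $P$ itself is some $Z^{in}$, checking that the acyclicity of $\Hom_\La(\mathbf{Z}^i, Q)$ holds trivially because the relevant sequences split, and that $P \in \CC$ since $\prj\La \subseteq \CC$ always holds for cluster tilting subcategories.

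Property $(ii)$ follows by taking the direct sum of two $n\Z$-totally acyclic complexes: the direct sum is again acyclic with projective terms, the cycles are the direct sums of the cycles (hence still in $\CC$, which is additive), and $\Hom_\La(-,Q)$ commutes with finite direct sums, so the required acyclicity is preserved. For $(iii)$, I would read off $\Ext^{in}_\La(M,P)$ from the complex $\Hom_\La(\mathbf{Z}^i, Q)$: since $M$ is a cycle in $\mathbf{P}$, the groups $\Ext^{jn}_\La(M,P)$ are computed as cohomology of the Hom-complex applied to the projective resolution pieces, and condition $(i)$ in the definition says precisely that $\Hom_\La(\mathbf{Z}^i, Q)$ is acyclic, forcing these $\Ext$ groups to vanish. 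Care is needed to match the indexing of the $\mathbf{Z}^i$ blocks with the degree $in$ so that the vanishing lands in the right spots.

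Property $(iv)$ is where the $n\Z$ condition does real work. Since $M \cong Z^{in}$, its $n$-th syzygy (in the $n$-abelian sense of $\CC$) is obtained from the segment $0 \to Z^{in} \to P^{in} \to \cdots \to P^{(i+1)n-1} \to Z^{(i+1)n} \to 0$, so $\Om^n(M) \cong Z^{(i+1)n}$, which is itself a cycle of the same $n\Z$-totally acyclic complex and hence again lies in $\nGprj\CC$; iterating gives $\Om^{in}(M) \in \nGprj\CC$ for all $i \geq 1$. The main subtlety here is confirming that the $n$-syzygy computed inside the $n$-abelian category $\CC$ genuinely agrees with the cycle $Z^{(i+1)n}$ of the ambient complex — this rests on $\CC$ being closed under $n$-syzygies (part of the $n\Z$-cluster tilting hypothesis via condition $(b)$ of the recalled theorem) and on the relevant $n$-exact sequence being the one cut out by the projective terms $P^{in}, \dots, P^{(i+1)n-1}$.

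Finally, for $(v)$, the functor $\Om^n$ on the stable category $\underline{\nGprj}\CC$ is well-defined by $(iv)$, and the key point is to construct a quasi-inverse using the $n$-cosyzygy $\Om^{-n}$, which sends $Z^{in}$ to $Z^{(i-1)n}$; because $\mathbf{P}$ is a two-sided (complete) complex, every cycle has both an $n$-syzygy and an $n$-cosyzygy within the complex, so $\Om^n$ and $\Om^{-n}$ are mutually inverse up to the projective-stable equivalence. I expect the main obstacle to be property $(iv)$ (and correspondingly the well-definedness in $(v)$): verifying that the abstract $n$-syzygy functor on the $n$-abelian category $\CC$ coincides with passing to the next cycle in the chosen totally acyclic complex, and that this is independent of choices modulo projectives, is the one step that genuinely requires the interplay between the $n$-exact structure of $\CC$ and the defining complex rather than a routine diagram check.
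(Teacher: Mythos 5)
The paper gives no proof of this proposition at all: it declares that ``the proof of the proposition is just based on the definition and is straightforward'' and skips it. Your unwinding of the definition is exactly the intended argument: splicing the $\Hom_{\La}(\mathbf{Z}^i,Q)$-exact segments shows $\mathbf{P}$ is a complete projective resolution in the sense of Enochs--Jenda (the paper itself remarks this right after the definition), which yields $(i)$ and in fact the stronger vanishing $\Ext^k_{\La}(M,P)=0$ for all $k\geqslant 1$ containing $(iii)$; the split complex built from $0\rt P\rt P\oplus P\rt P\rt 0$ handles $\prj\La\subseteq\nGprj\CC$, additivity of everything gives $(ii)$, and shifting the complex gives $(iv)$ and $(v)$.

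One indexing slip should be corrected before this could stand as a proof. With the paper's conventions --- $Z^j=\Ker(P^j\rt P^{j+1})$ and the $n$-syzygy being the \emph{left} end of an exact sequence $0\rt L\rt P^{n-1}\rt\cdots\rt P^0\rt M\rt 0$ --- the segment $0\rt Z^{in}\rt P^{in}\rt\cdots\rt P^{(i+1)n-1}\rt Z^{(i+1)n}\rt 0$ exhibits $Z^{in}$ as the $n$-syzygy of $Z^{(i+1)n}$, so $\Om^n(M)\cong Z^{(i-1)n}$ up to projective summands, \emph{not} $Z^{(i+1)n}$; dually $\Om^{-n}$ moves rightward, $Z^{in}\mapsto Z^{(i+1)n}$. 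You have the two directions swapped consistently in $(iv)$ and $(v)$. Since $\mathbf{P}$ is doubly infinite and every $Z^{jn}$ lies in $\nGprj\CC$ (shift the complex), the argument survives verbatim once the indices are swapped; and the residual ambiguity ``up to projective summands'' --- which you rightly identify as the one genuinely delicate point when comparing the abstract $n$-syzygy with the cycle of the chosen complex --- is harmless in $(v)$ because one works in the stable category, and in $(iv)$ is absorbed by taking the syzygy realized by the complex itself together with $\prj\La\subseteq\nGprj\CC$ and closure under finite direct sums from $(i)$ and $(ii)$.
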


\begin{remark}
\begin{itemize}
\item [$(i)$] Recall that an artin algebra $\La$ is called CM-free if $\Gprj\La=\prj\La$. Let $\La$ be an $n$-representation-finite algebra in the sense of \cite{IO} and $\CM(\La)$ be the unique $n$-cluster tilting subcategory of $\mmod\La$. Then it is not difficult to see that $\nGprj\CM(\La)=\prj\La.$ That is $\CM(\La)$ is $n\mbox{-}{\rm CM}$-free.
\item [$(ii)$] Over a self-injective algebra $\La$, $\Gprj\La=\mmod \La$. Similarly, if $\La$ is a self-injective algebra, for every $n\Z$-cluster tilting subcategory $\CC$,  $\nGprj\CC=\CC$.
\end{itemize}
\end{remark}

\begin{definition}\label{DefnZGCTS}
Let $\CC$ be an $n\Z$-cluster tilting subcategory of $\mmod\La.$
\begin{itemize}
\item[$(i)$] We say that $\CC$ is $n\Z$-Gorenstein projective cluster tilting, for simplicity  $n\Z$-$\Gprj$cluster tilting, if for every object $C \in \CC$, there exists a non-negative integer $i$ such that $\Om^{in}(C) \in \nGprj\CC.$
\item[$(ii)$] We say that $\CC$ is $n\Z$-Gorenstein injective cluster tilting, for simplicity $n\Z$-$\Ginj$cluster tilting, if for every object $C \in \CC$, there exists a non-negative integer $i$ such that $\Om^{-in}(C) \in \nGinj\CC.$
\item[$(iii)$] We say that $\CC$ is $n\Z$-Gorenstein cluster tilting, for simplicity $n\Z$-G-cluster tilting, if it is both $n\Z$-Gorenstein projective and $n\Z$-Gorenstein injective cluster tilting.
\end{itemize}
\end{definition}

Recall that an artin algebra $\La$ is called Iwanaga-Gorenstein, or simply Gorenstein,  if  both $\id_{\La}\La$, the self-injective dimension of $\La$ as a right $\La$-module and $\id_{{\La}^{\op}}\La$,  the self-injective dimension of $\La$ as a left $\La$-module,  are finite. It is known that in this case $\id_{\La}\La=\id_{{\La}^{\op}}\La$, say is equal to $d \in \N$. Then $\La$ is also called a $d$-Gorenstein algebra. In the following we show that there is a nice connection between the $n\Z$-Gorenstein cluster tilting subcategories and Gorenstein algebras. More explicitly, we prove the following theorem.

\begin{theorem}\label{3.10}
Let $\CC$ be an $n\Z$-cluster tilting subcategory of $\mmod\La.$ Then $\CC$ is an $n\Z$-G-cluster tilting subcategory of $\mmod \La$ if and only if $\La$ is a Gorenstein algebra.
\end{theorem}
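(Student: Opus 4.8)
The plan is to prove the two implications separately. Write $d$ for the common value $\id_{\La}\La=\id_{\La^{\op}}\La$ whenever $\La$ is Gorenstein.

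\emph{Sufficiency.} Assume $\La$ is $d$-Gorenstein. Fix $C\in\CC$ and choose $i$ with $in\geq d$. First, since $\CC$ is $n\Z$-cluster tilting it is closed under $n$-syzygies (condition $(b)$ of the defining characterization), so iterating gives $\Om^{in}(C)\in\CC$. Second, over a $d$-Gorenstein algebra every module has Gorenstein projective dimension at most $d$, so the high syzygy $\Om^{in}(C)$ is Gorenstein projective; hence $\Om^{in}(C)\in\CC\cap\Gprj\La$. Invoking the equality $\nGprj\CC=\CC\cap\Gprj\La$, valid for Iwanaga–Gorenstein $\La$ (the strengthening of Proposition \ref{Prop 3.6}$(i)$ announced in the introduction), we get $\Om^{in}(C)\in\nGprj\CC$, so $\CC$ is $n\Z$-$\Gprj$cluster tilting. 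The dual argument, using closure under $n$-cosyzygies (condition $(c)$), the bound $\Gid_{\La}M\leq d$, and the dual equality $\nGinj\CC=\CC\cap(\text{Gorenstein injectives})$, shows $\CC$ is $n\Z$-$\Ginj$cluster tilting. Thus $\CC$ is $n\Z$-G-cluster tilting.

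\emph{Necessity.} Assume $\CC$ is $n\Z$-G-cluster tilting. I would first record that every object of $\CC$ has finite Gorenstein projective dimension: for $C\in\CC$ the hypothesis supplies $i$ with $\Om^{in}(C)\in\nGprj\CC\subseteq\Gprj\La$ by Proposition \ref{Prop 3.6}$(i)$, whence $\Gpd_{\La}C\leq in<\infty$. The crucial step is to \emph{propagate} this finiteness from $\CC$ to all of $\mmod\La$. Here I would use that an $n$-cluster tilting subcategory has finite $\CC$-resolution dimension: by Iyama's theory \cite{I} every $M\in\mmod\La$ admits a finite resolution $0\to C_{m}\to\cdots\to C_{0}\to M\to0$ with $C_{j}\in\CC$ (indeed of length at most $n-1$). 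Since Gorenstein projective dimension is subadditive along short exact sequences and each $\Gpd_{\La}C_{j}<\infty$, splicing this resolution yields $\Gpd_{\La}M<\infty$ for all $M$. Finally, by the classical characterization of Iwanaga–Gorenstein Artin algebras in terms of Gorenstein homological dimension—namely that $\La$ is Gorenstein if and only if every finitely generated module has finite Gorenstein projective dimension \cite{EJ}—I conclude that $\La$ is Gorenstein. (The $n\Z$-$\Ginj$ half of the hypothesis gives, symmetrically, finiteness of $\Gid_{\La}M$ for all $M$, which re-proves the conclusion and confirms the two-sided nature of the Gorenstein condition.)

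I expect the necessity direction, and within it the propagation step, to be the main obstacle. The delicate point is that one cannot shortcut by testing only the distinguished objects $\La\in\prj\La\subseteq\CC$ and $D\La\in\inj\La\subseteq\CC$: knowing merely that the injective cogenerator $D\La$ has finite Gorenstein projective dimension does not upgrade to finite projective dimension by naive dimension shifting, since such an upgrade would have to relate projective and injective dimensions directly and thereby collide with the Gorenstein Symmetry problem. This is precisely why the argument must run the finiteness through \emph{all} modules, which is made possible by the bounded $\CC$-resolutions coming from the higher-cluster-tilting structure together with the deep classical fact that pointwise finite Gorenstein projective dimension forces $\La$ to be Gorenstein.
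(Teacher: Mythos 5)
Your sufficiency direction has a genuine gap at exactly the point where the paper does its real work. From $\Om^{in}(C)\in\CC\cap\Gprj\La$ you conclude $\Om^{in}(C)\in\nGprj\CC$ by invoking the equality $\nGprj\CC=\CC\cap\Gprj\La$ as a known ``strengthening announced in the introduction.'' But that equality is not an input available to you: it is proved inside the paper's own Proposition \ref{TheIwangaGoreNZcluster}, i.e.\ it is part of the very argument you are being asked to supply, and the inclusion you need, $\CC\cap\Gprj\La\subseteq\nGprj\CC$, is the nontrivial one. By Definition \ref{DefnZGP}, membership in $\nGprj\CC$ requires a complete resolution \emph{all of whose $in$-cycles lie in $\CC$}, and knowing $G\in\CC\cap\Gprj\La$ only controls the module itself. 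The paper closes this by taking an ordinary totally acyclic complex ${\bf P}$ for $G$ and checking its cycles: on the resolution side they are $n$-syzygies of $G$, hence in $\CC$ by closure under $\Om^n$; for a cycle $Z^{in}$ on the coresolution side one chooses $k$ with $\Om^{kn}(Z^{in})\in\Gprj\La\cap\CC$ and dimension-shifts, $\Ext^{j}_{\La}(Z^{in},C)\simeq\Ext^{j}_{\La}(\Om^{kn}(Z^{in}),\Om^{kn}(C))=0$ for $0<j<n$, so that $Z^{in}\in{}^{\perp_n}\CC=\CC$. Without this step your argument only yields $\Om^{in}(C)\in\CC\cap\Gprj\La$, which is weaker than what Definition \ref{DefnZGCTS} demands.

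Your necessity direction takes a genuinely different route from the paper, and it rests on an unproved pillar. The propagation step is fine: finite $\CC$-resolutions exist by \cite[Theorem 2.2.3]{I}, and the Auslander--Bridger calculus makes finite Gorenstein projective dimension stable under splicing. The problem is the final citation: the claim that an artin algebra is Gorenstein as soon as every finitely generated \emph{right} module has finite Gorenstein projective dimension is not in \cite{EJ} (that paper only introduces the classes), and in this pointwise, one-sided form it is not a standard quotable theorem --- the textbook statements are two-sided or carry a uniform bound. It can in fact be proved (finite G-dimension of the simples plus the Auslander--Bridger formula gives a uniform bound $d$, hence $\id\La_\La\le d$; then $D({}_\La\La)$ has finite Gorenstein projective dimension and injective dimension zero, and since projectives now have finite injective dimension its high syzygy is Gorenstein projective of finite injective dimension, hence projective, giving $\pd_\La D({}_\La\La)<\infty$), but that is real content you must supply, not cite. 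Note also that your parenthetical claim that the $n\Z$-$\Ginj$ half is redundant is exactly as strong as this unproved fact: in the paper's architecture (Proposition \ref{GorSymmetryConj}, whose label even alludes to the Gorenstein symmetry problem) both halves are used, the $\Gprj$ half yielding $\id_{\La^{\op}}\La<\infty$ and the $\Ginj$ half yielding $\id_{\La}\La<\infty$. The paper's necessity proof is also far more economical than yours: it tests the single module $D({}_\La\La)\in\CC$ and shows directly that $\Om^{in}(D({}_\La\La))$ is projective, by splitting $0\rt\Om^{in}(D({}_\La\La))\rt Q^0\rt L\rt 0$ through a dimension shift down to $\Ext^1_{\La}(H,D({}_\La\La))=0$; no global statement about all of $\mmod\La$ is ever needed.
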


The proof of the theorem is proceeded by the following two propositions.

\begin{proposition}\label{GorSymmetryConj}
Let $\CC$ be an $n\Z$-cluster tilting subcategory of $\mmod\La.$
\begin{itemize}
\item [$(i)$] If $\CC$ is $n\Z$-$\Gprj$cluster tilting, then $\id_{{\La}^{\op}}\La$ is finite.
\item [$(ii)$] If $\CC$ is $n\Z$-$\Ginj$cluster tilting, then $\id_{\La}\La$ is finite.
\item [$(iii)$] If $\CC$ is $n\Z$-G-cluster tilting, then $\La$ is a Gorenstein algebra.
\end{itemize}
\end{proposition}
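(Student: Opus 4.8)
The plan is to prove $(i)$, to deduce $(ii)$ by duality, and then to read off $(iii)$ by combining the two. Since $\La$ is Gorenstein precisely when both $\id_\La\La$ and $\id_{\La^{\op}}\La$ are finite, part $(iii)$ is immediate once $(i)$ and $(ii)$ are in hand; the reason for isolating the two one-sided statements is that, in the absence of the Gorenstein Symmetry Conjecture, each of the two hypotheses controls only one of the two self-injective dimensions. For the passage from $(i)$ to $(ii)$ I would use the duality $D\colon\mmod\La\to\mmod\La^{\op}$: by the Remark following Definition \ref{DefnZGP}, $D(\CC)$ is again $n\Z$-cluster tilting, and $D$ interchanges $\nGprj$ with $\nGinj$ and $\Om^{n}$ with $\Om^{-n}$. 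Hence $\CC$ is $n\Z$-$\Ginj$cluster tilting over $\La$ if and only if $D(\CC)$ is $n\Z$-$\Gprj$cluster tilting over $\La^{\op}$, and applying $(i)$ to $\La^{\op}$ yields $\id_{(\La^{\op})^{\op}}(\La^{\op})=\id_\La\La<\infty$, which is $(ii)$.

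So it remains to prove $(i)$. The first step is to observe that every injective module lies in $\CC$: if $I$ is injective then $\Ext^{>0}_\La(\CC,I)=0$, so $I\in\CC^{\perp_n}=\CC$ (dually $\prj\La\subseteq\CC$, already recorded in Proposition \ref{Prop 3.6}$(i)$). In particular the injective cogenerator $J=D(\La)$ belongs to $\CC$, so by hypothesis there is an $i\ge 0$ with $\Om^{in}(J)\in\nGprj\CC\subseteq\Gprj\La$; hence $\Gpd_\La J<\infty$. Recalling the duality formula $\id_{\La^{\op}}\La=\pd_\La J$, statement $(i)$ is therefore equivalent to the single assertion that the injective cogenerator, now known to have finite Gorenstein projective dimension, in fact has finite projective dimension.

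To make this passage I would show that \emph{every} $M\in\mmod\La$ has finite Gorenstein projective dimension and then invoke the standard characterization of $\id_{\La^{\op}}\La$ in terms of these dimensions. The reduction to all of $\mmod\La$ uses the $n$-cluster tilting structure: every module admits a finite $\CC$-resolution (extracted from functorial finiteness together with the existence of $n$-syzygies), each term of which has finite Gorenstein projective dimension by the argument of the previous paragraph applied to arbitrary objects of $\CC$; since finite Gorenstein projective dimension satisfies the two-out-of-three property on short exact sequences, $M$ inherits it. A useful tool here, and the source of the rigidity that finiteness of the Gorenstein dimension enjoys, is the Tate-cohomology lemma that a Gorenstein projective module of finite injective dimension is projective: the defining sequence $0\to G\to P\to G'\to 0$ splits because $\widehat{\Ext}^{1}_\La(G',G)=\Ext^{1}_\La(G',G)=0$ once $\id G<\infty$.

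The main obstacle is exactly this final homological bridge. The naive attempt—take $G=\Om^{in}(J)$ and prove $G$ projective via the Tate lemma—stalls, because controlling $\id(\Om^{in}J)$ through the syzygy sequences of $J$ forces one to know $\id P_{0}<\infty$, i.e. finiteness of $\id_\La\La$, which is the \emph{other} self-injective dimension and is not available from the one-sided hypothesis. Thus the statement cannot be obtained by a single-module syzygy computation on $J$; one must genuinely use that every object of $\CC$, and not merely $J$, has finite Gorenstein projective dimension, feed this through the finite $\CC$-resolutions above, and then apply the characterization of $\id_{\La^{\op}}\La$. Carrying this out while keeping the variance straight—so that one lands on precisely $\id_{\La^{\op}}\La$ rather than, inadvertently, on both self-injective dimensions at once—is the delicate point, and it is what ties the proposition to the Gorenstein Symmetry Conjecture reflected in its label.
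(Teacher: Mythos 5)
Your opening moves coincide with the paper's: the reduction of $(i)$ to $\pd_\La D({}_\La\La)<\infty$ via the duality formula, the observation that $D(\La)\in\CC$ so that $G:=\Om^{in}(D\La)\in\nGprj\CC\subseteq\Gprj\La$ for some $i$, the passage $(i)\Rightarrow(ii)$ by applying $D$, and $(iii)$ as the conjunction. The gap is in your bridge from finite Gorenstein projective dimension to finite projective dimension. The route ``show every $M\in\mmod\La$ has finite $\Gpd$, then invoke the standard characterization of $\id_{\La^{\op}}\La$'' does not close: a bound $\Gpd_\La M\le d$ for all $M\in\mmod\La$ gives $\Ext^{>d}_\La(-,\La)=0$ (since $\Om^d M$ is then Gorenstein projective), i.e.\ finiteness of $\id_\La\La$ --- the conclusion of part $(ii)$, the \emph{wrong} side --- while extracting $\id_{\La^{\op}}\La$ from it requires precisely converting $\Gpd_\La D(\La)<\infty$ into $\pd_\La D(\La)<\infty$, the single-module statement you started from. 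Your Tate lemma cannot run that conversion by induction along syzygies or Gorenstein approximations of $D(\La)$, for the reason you yourself identify: the intermediate modules have uncontrolled injective dimension unless $\id_\La\La<\infty$ is already known. So enlarging the problem to all of $\mmod\La$ buys nothing; the circularity you flagged as ``the delicate point'' is still present, unresolved, at the end of your argument.

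What you are missing --- and where your claim that the statement ``cannot be obtained by a single-module syzygy computation on $J$'' is wrong --- is the paper's actual proof, which is exactly such a computation: instead of trying to bound $\id(\Om^{in}J)$, one shifts the obstruction back to $J$ itself. From the ($n\Z$-)totally acyclic complex witnessing $G\in\nGprj\CC$, extract a coresolution $0\rt G\rt Q^0\rt\cdots\rt Q^{m-1}\rt H\rt 0$ with the $Q^j$ projective and $H$ a cycle, hence Gorenstein projective, and set $L=\Coker(G\rt Q^0)$. Dimension shifting in the first variable gives $\Ext^1_\La(L,G)\simeq\Ext^{m}_\La(H,\Om^{in}J)$, and dimension shifting in the second variable along the syzygy sequences of $J$ --- legitimate because $\Ext^{>0}_\La(H,\prj\La)=0$ as $H$ is Gorenstein projective --- brings this down to $\Ext^1_\La(H,J)=0$ by injectivity of $J$ (take $m=in+1$). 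Hence $0\rt G\rt Q^0\rt L\rt 0$ splits, $G$ is projective, and $\pd_\La D(\La)\le in$; no injective-dimension bound on $G$ is ever needed. If instead you want to salvage your own outline with minimal change, the Auslander--Buchweitz theory the paper cites as [ABu] supplies the missing bridge directly: $\Gpd_\La J<\infty$ yields a hull $0\rt J\rt Y\rt X\rt 0$ with $\pd_\La Y<\infty$ and $X\in\Gprj\La$, which splits by injectivity of $J$, so $\pd_\La J\le\pd_\La Y<\infty$. Either way, the detour through finite $\CC$-resolutions of arbitrary modules and the two-out-of-three property is unnecessary.
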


\begin{proof}
$(i)$  We prove that $\rm{pd}_{\La} \ D({}_{\La}\La)$ is finite.  Since $D({}_{\La}\La)$ is in $\CC$,   there exists a non-negetive integer $i$ such that $\Om^{in}(D({}_{\La}\La)) \in \nGprj\CC.$ So there exists an $n\Z$-totally acyclic complex
\[ 0 \rt \Om^{in}(D({}_{\La}\La)) \rt Q^0 \rt \cdots \rt Q^{in-1} \rt H \rt 0,\]
 where $Q^j$ for $j\in\lbrace 0,...,in-1\rbrace$ is projective and $H\in\nGprj\CC.$ Let $L$  be the cokernel of $0 \rt \Om^{in}(D({}_{\La}\La)) \rt Q^0 $.  Using dimension-shifting argument   we get
 $$\Ext^1_{\La}(L, \Om^{in}(D({}_{\La}\La)))\simeq \Ext^{in+1}_{\La}(H, \Om^{in}(D({}_{\La}\La)))\simeq
 \Ext^1_{\La}(H,D({}_{\La}\La))=0.$$
  This implies that $\Om^{in}(D({}_{\La}\La)$ is projective. Hence  $\rm{pd}_{\La} \ D({}_{\La}\La) < \infty$.

$(ii)$ The proof follows by the dual argument of the proof of part $(i)$.

$(iii)$ Immediate consequence of parts $(i)$ and $(ii)$.
\end{proof}

\begin{proposition}\label{TheIwangaGoreNZcluster}
Let $\La$ be a Gorenstein algebra and $\CC$ be an $n\Z$-cluster tilting subcategory of $\mmod\La.$ Then $\CC$ is an $n\Z$-G-cluster tilting subcategory of $\mmod \La.$ Moreover, in this case we have $\nGprj\CC=\Gprj\La \cap \CC$ and $\nGinj\CC=\Ginj\La \cap \CC.$
\end{proposition}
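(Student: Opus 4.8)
The plan is to prove the two displayed equalities first and then read off the $n\Z$-G-cluster tilting property from them. Throughout write $d$ for the common value $\id_{\La}\La=\id_{{\La}^{\op}}\La$, so that $\La$ is $d$-Gorenstein. The two standard facts I will lean on are that over such an algebra every module has Gorenstein projective dimension at most $d$ — equivalently, $\Om^{k}(M)\in\Gprj\La$ whenever $k\geq d$ — and, dually, $\Gid_{\La}(M)\leq d$ for every $M$.

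First I would establish $\nGprj\CC=\Gprj\La\cap\CC$. The inclusion $\subseteq$ is already recorded in Proposition \ref{Prop 3.6}$(i)$, so only $\supseteq$ needs argument. Given $G\in\Gprj\La\cap\CC$, choose a complete projective resolution ${\bf P}$ of $G$ with $G\cong Z^{0}$. Because $G$ is Gorenstein projective, $\Hom_{\La}({\bf P},Q)$ is acyclic for every projective $Q$; extracting the finite pieces $\Hom_{\La}({\bf Z}^{i},Q)$ from this global acyclicity yields condition $(i)$ of $n\Z$-total acyclicity for free. For condition $(ii)$ I would observe that the cycles at positions divisible by $n$ are exactly the iterated $n$-syzygies and $n$-cosyzygies of $G$: the cycle $Z^{jn}$ is the $|j|$-fold iterated $n$-cosyzygy of $G$ when $j\geq 0$ and the $|j|$-fold iterated $n$-syzygy of $G$ when $j\leq 0$, computed in $\mmod\La$. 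Since $G\in\CC$ and $\CC$ is $n\Z$-cluster tilting, conditions $(b)$ and $(c)$ of \cite[Definition-Proposition 2.15]{IJ} give $\Om^{n}(\CC)\subseteq\CC$ and $\Om^{-n}(\CC)\subseteq\CC$; iterating these, every such cycle lies in $\CC$. Hence ${\bf P}$ is $n\Z$-totally acyclic and $G\cong Z^{0}\in\nGprj\CC$.

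With the equality in hand, the $n\Z$-$\Gprj$cluster tilting property is immediate. Given any $C\in\CC$, pick $i$ with $in\geq d$; then $\Om^{in}(C)\in\Gprj\La$ by the Gorenstein hypothesis, while $\Om^{in}(C)\in\CC$ by iterating closure under $\Om^{n}$. Thus $\Om^{in}(C)\in\Gprj\La\cap\CC=\nGprj\CC$, which is precisely the requirement in Definition \ref{DefnZGCTS}$(i)$. The dual statements — $\nGinj\CC=\Ginj\La\cap\CC$ and the $n\Z$-$\Ginj$cluster tilting property — follow by the symmetric argument, most economically via the duality $\nGinj\CC\simeq\nGprj D(\CC)^{\op}$ noted in the Remark: applying the already-proved projective case to the $d$-Gorenstein algebra $\La^{\op}$ and the $n\Z$-cluster tilting subcategory $D(\CC)$, and dualising back. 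Combining the two halves shows that $\CC$ is $n\Z$-G-cluster tilting.

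The step I expect to be the main obstacle is the verification of condition $(ii)$ inside the equality $\nGprj\CC=\Gprj\La\cap\CC$ — specifically, matching the abstractly defined $n$-syzygy and $n$-cosyzygy functors $\Om^{\pm n}$ on the $n$-abelian category $\CC$ with the genuine cycles $Z^{jn}$ of a complete projective resolution in $\mmod\La$, and arranging the resolution so that these cycles land in $\CC$ literally rather than merely up to projective summands. The cleanest route is to assemble ${\bf P}$ by splicing the $n$-exact projective resolution of $G$ supplied by the $n\Z$-cluster tilting structure (going leftwards) with the projective coresolution available because $G$ is Gorenstein projective (going rightwards), so that closure under $\Om^{\pm n}$ keeps every $n$-divisible cycle inside $\CC$ by construction.
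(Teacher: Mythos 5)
Your outline (prove the equalities first, then deduce the $n\Z$-G-cluster tilting property, and handle the injective side via $\nGinj\CC\simeq\nGprj D(\CC)^{\op}$) coincides with the paper's, and the negative-degree cycles, the deduction of Definition \ref{DefnZGCTS}$(i)$ from the equality, and the duality reduction are all fine. But the verification of condition $(ii)$ of $n\Z$-total acyclicity for the cycles $Z^{in}$ with $i>0$ --- exactly the step you flag as the main obstacle --- has a genuine gap, and the splicing you propose does not repair it. Your identification of $Z^{jn}$ for $j\geq 0$ with the iterated $n$-cosyzygy $\Om^{-jn}(G)$ is false: the cosyzygy $\Om^{-n}$ of condition $(c)$ in \cite[Definition-Proposition 2.15]{IJ} is computed from an $n$-exact sequence whose middle terms are \emph{injective} $\La$-modules, whereas the right half of a complete projective resolution consists of \emph{projectives}. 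Unless $\La$ is self-injective these are genuinely different modules (each $Z^{in}$ is Gorenstein projective, while the injective $n$-cosyzygy of $G$ typically is not), so $\Om^{-n}(\CC)\subseteq\CC$ gives no control whatsoever over the cokernels along a projective coresolution of $G$. Splicing the leftward $n$-syzygy resolution with the rightward projective coresolution that exists because $G\in\Gprj\La$ leaves those rightward cycles exactly as unconstrained as before; nothing in the $n\Z$-structure puts them in $\CC$ ``by construction.''

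The paper closes this gap with an orthogonality argument that your proposal is missing: it shows directly that $Z^{in}\in{}^{\perp_n}\CC=\CC$. Since $Z^{in}$ is a cycle of a totally acyclic complex, it is Gorenstein projective, so $\Ext^{>0}_{\La}(Z^{in},\prj\La)=0$ and one may shift dimensions in \emph{both} variables: for $0<j<n$, all $C\in\CC$, and $k\geq i$,
\[
\Ext^j_{\La}(Z^{in},C)\simeq \Ext^{j+kn}_{\La}(Z^{in},\Om^{kn}(C))\simeq \Ext^j_{\La}(\Om^{kn}(Z^{in}),\Om^{kn}(C))=0,
\]
where the vanishing holds because $\Om^{kn}(C)\in\CC$ by $\Om^n$-closure, and $\Om^{kn}(Z^{in})$ is --- up to projective summands, which is harmless as $\CC$ contains $\prj\La$ and is closed under summands --- a cycle in degree $\leq 0$ of the same complex, hence a syzygy of $G$ and so in $\Gprj\La\cap\CC$; then $\Ext^j_{\La}(\CC,\CC)=0$ for $0<j<n$ applies. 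In other words, the positive cycles land in $\CC$ not because they are cosyzygies but because they satisfy the Ext-vanishing characterization $\CC={}^{\perp_n}\CC$ of the $n$-cluster tilting subcategory. If you insert this argument in place of your appeal to $\Om^{-n}$-closure, the rest of your proof goes through and agrees with the paper's.
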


\begin{proof}
We first show that $\nGprj\CC=\Gprj\La \cap \CC$. By Proposition \ref{Prop 3.6}, we have  inclusion $\nGprj\CC \subseteq \Gprj\La\cap \CC$. Now let $G \in \Gprj\La\cap \CC.$  Since  $G$ is a Gorenstein projective module, by defintion,  there exits a totally acyclic complex
 \[{\bf P}: \ \ \cdots \rt P^{-1}\st{d^{-1}}\rt P^0 \st{d^0}\rt P^1 \rt \cdots \]
with $G=\rm{Ker}\ d^0.$ We claim that $\bf{P}$ is an $n\Z$-totally acyclic complex. Since $G\in \CC$, all $ni$-cycles for $i<0$, are in $\CC$. It remains to show that all $ni$-cycles for $i>0$, are in $\CC$. Let $i>0$ and $Z^{in}={\rm{Ker}}(P^{in}\longrightarrow P^{in+1})$. We show that $Z^{in}\in \CC$. Since $\La$ is Gorenstein, we can choose a positive integer $k$ such that $\Om^{kn}(Z^{in}) \in \Gprj\La \cap \CC$. Now for all $j\in\lbrace 1,...,n-1\rbrace$ and all $C\in \CC$,
$$\Ext^j_{\La}(Z^{in}, C)\simeq \Ext^{j+kn}_{\La}(Z^{in}, \Om^{kn}(C) )\simeq \Ext^j(\Om^{kn}(Z^{in}), \Om^{kn}(C))= 0.$$
So $\Z^{in}\in\CC$ by definition, as it was claimed.

Now we show that $\CC$ is an $n\Z$-$G$-cluster tilting subcategory. We only prove that $\CC$ is an $n\Z$-$\Gprj$cluster tilting subcategory of $\mmod\La$. The proof of the fact that $\CC$ is also an  $n\Z$-$\Ginj$cluster tilting subcategory followes by duality. To this end, we show that for every $C\in\CC$, there exists a non-negetive integer $i$ such that $\Om^{in}(C) \in \nGprj\CC$. Since $\La$ is a Gorenstein algebra,  there exists an integer  $m \in \N$ such that $\Om^{m}(\mmod\La)=\Gprj\La$. So for $C\in \CC$, there exists non-negeive integer $i$ such that $\Om^{in}(C)\in \Gprj\La \cap \CC$. Hence by the first part of the proof
$\Om^{in}(C)\in \nGprj\CC$.
\end{proof}

\textit{Proof of  Theorem \ref{3.10}.}
 It follows as an immediate consequence of the above two propositions.

\begin{proposition}\label{GProjApproxiamation}
Let $\CC$ be an $n\Z$-cluster tilting subcategory of $\mmod \La.$
\begin{itemize}
\item[$(i)$] If $\CC$ is an $n\Z$-$\Gprj$cluster tilting subcategory of $\mmod\La,$  then for every $C \in \CC$, there is a short exact sequence
\[0 \rt M \rt G \rt C \rt 0\] such that $G \in \nGprj\CC$ and $\pd_{\La}M < \infty$.  In this case, we say that $C$ admits an $n\Z$-Gorenstein projective precover in $\CC.$
\item[$(ii)$]	If $\CC$ is  an $n\Z$-$\Ginj$cluster tilting subcategory of $\mmod\La,$ then for every $C \in \CC$, there is a short exact sequence
\[0 \rt C \rt E \rt N \rt 0\] such that $E \in \nGinj\CC$ and $\id_{\La}N < \infty$. In this case, we say that $C$ admits an $n\Z$-Gorenstein injective preenvelope in $\CC.$
\end{itemize}
\end{proposition}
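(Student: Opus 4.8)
The plan is to transport the classical construction of a special Gorenstein-projective precover (for modules of finite Gorenstein projective dimension) into the $n\Z$-setting; I treat $(i)$ and obtain $(ii)$ by duality. Fix $C\in\CC$. Because $\CC$ is $n\Z$-$\Gprj$cluster tilting, there is a non-negative integer $i$ with $K:=\Om^{in}(C)\in\nGprj\CC$, and iterating the $n$-syzygy yields an exact sequence
\[0\rt K\rt P_{in-1}\rt\cdots\rt P_0\rt C\rt 0\qquad(*)\]
in $\mmod\La$ with every $P_j$ projective. In particular $\Gpd_\La C\le in<\infty$, so what I am after is exactly the higher analogue of the statement ``a module of finite Gorenstein projective dimension has a special Gorenstein-projective precover''.

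The second ingredient comes from $K$ itself. By Definition \ref{DefnZGP}, $K\cong Z^{jn}$ is an $n$-divisible cycle of an $n\Z$-totally acyclic complex $\mathbf P$; reading $\mathbf P$ to the right of $K$ produces an exact sequence
\[0\rt K\rt S^0\rt\cdots\rt S^{in-1}\rt G\rt 0\qquad(**)\]
with each $S^t$ projective and $G:=Z^{(j+i)n}\in\nGprj\CC$. The crucial point is that, as noted right after the definition of $n\Z$-totally acyclic complexes, $\mathbf P$ is a genuine complete projective resolution in the sense of Enochs--Jenda; hence every cycle $Z^t$ of $\mathbf P$ --- and not merely the $n$-divisible ones --- satisfies $\Ext^{\,r}_\La(Z^t,Q)=0$ for all $r\ge 1$ and all projective $Q$. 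This is precisely the Ext-vanishing that makes the forthcoming splice of $(*)$ and $(**)$ behave.

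Now I splice. Both $(*)$ and $(**)$ are exact sequences of the same length $in$ sharing the leftmost term $K$, with projective middle terms. Forming the pushout of $(*)$ and $(**)$ along $K$ (equivalently, pushing $K$ simultaneously through the two sequences) produces a short exact sequence
\[0\rt M\rt G'\rt C\rt 0,\]
in which $M$ is assembled by successive extensions from the projective modules $S^0,\dots,S^{in-1}$, so that $\pd_\La M<\infty$, while $G'$ is obtained from $G$ by successive extensions by the projectives $P_{in-1},\dots,P_0$. The Ext-vanishing of the previous paragraph is what guarantees that these pushout steps stay exact and that the two ``halves'' separate as claimed. Renaming $G'$ as $G$ gives the desired sequence, once I check $G'\in\nGprj\CC$.

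The main obstacle, and the real content, is this final verification that $G'$ lies in $\nGprj\CC$. For it I first check that $\nGprj\CC$ is closed under extensions: given $0\rt G_1\rt W\rt G_2\rt 0$ with $G_1,G_2\in\nGprj\CC$, the horseshoe lemma produces a complete projective resolution of $W$ that is the termwise direct sum of chosen complete resolutions of $G_1$ and $G_2$; this direct sum is again $n\Z$-totally acyclic, since the associated $\Hom$-into-projective complexes are direct sums of acyclic ones and its $n$-divisible cycles are extensions of $n$-divisible cycles of the summands, hence lie in the extension-closed subcategory $\CC$. As $\prj\La\subseteq\nGprj\CC$ by Proposition \ref{Prop 3.6}, the module $G'$ --- an iterated extension of $G\in\nGprj\CC$ by projectives --- therefore belongs to $\nGprj\CC$, finishing $(i)$. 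Part $(ii)$ follows by the dual construction (complete injective coresolutions and $\nGinj\CC$), or formally by applying $(i)$ to the $n\Z$-cluster tilting subcategory $D(\CC)\subseteq\mmod\La^{\op}$ together with the identification $\nGinj\CC\simeq\nGprj D(\CC)^{\op}$ of the Remark.
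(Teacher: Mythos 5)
Your proposal is correct in substance and shares the paper's skeleton --- both are the Auslander--Buchweitz pitchfork, starting from the same two length-$in$ sequences: the projective resolution $0 \rt \Om^{in}(C) \rt P^{in-1} \rt \cdots \rt P^0 \rt C \rt 0$ and the $\Hom_{\La}(-,\prj\La)$-exact coresolution $0 \rt \Om^{in}(C) \rt Q^0 \rt \cdots \rt Q^{in-1} \rt S \rt 0$ read off the $n\Z$-totally acyclic complex, with $S \in \nGprj\CC$. Where you genuinely diverge is the splice and, crucially, the closure property of $\nGprj\CC$ it consumes. The paper lifts the identity of $\Om^{in}(C)$ to a chain map between the two truncated complexes (a quasi-isomorphism) and takes its mapping cone $0 \rt Q^0 \rt P^{in-1}\oplus Q^1 \rt \cdots \rt P^0\oplus S \st{\phi}\rt C \rt 0$; the precover is then the \emph{literal direct sum} $P^0\oplus S$, so only Proposition \ref{Prop 3.6} (closure under finite direct sums and $\prj\La \subseteq \nGprj\CC$) is needed, and $\pd_{\La}\Ker\phi < \infty$ is read off the projective tail of the cone. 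You instead run iterated pushouts and then prove an extra lemma --- closure of $\nGprj\CC$ under extensions --- to conclude that your middle term lies in $\nGprj\CC$. That lemma is true and your sketch is essentially right, but note the points it silently uses: the right half of the glued complete resolution needs a co-horseshoe, which works because $\Ext^1_{\La}(G_2,Q)=0$ for $Q$ projective; the $\Hom(-,Q)$-acyclicity of the glued complex follows not because it is a direct sum \emph{of complexes} (the horseshoe differential has an off-diagonal entry) but because the degreewise-split exact sequence of complexes stays exact under $\Hom(-,Q)$; and the $n$-divisible cycles land in $\CC$ because $\CC=\CC^{\perp_n}={}^{\perp_n}\CC$ forces $\CC$ to be extension closed --- a fact you assert but should record. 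Two smaller imprecisions: your kernel $M$ is not literally an iterated extension of the projectives $S^t$ (that would make $M$ projective, false in general); rather it admits a finite projective resolution assembled from the $Q^j$ and $P^j$, which is exactly what yields $\pd_{\La}M<\infty$. And the ``extensions of $G$ by projectives'' forming your $G'$ actually split, since $\Ext^1_{\La}(G,P)=0$ for $G$ Gorenstein projective --- which is precisely why the paper's mapping-cone formulation can present the precover as a plain direct sum and dispense with extension closure altogether. Net comparison: your route costs extra verifications but buys a reusable extension-closure lemma for $\nGprj\CC$ (morally akin to the $n$-resolving behaviour established later in Theorem \ref{NresolvingGP}), while the paper's cone trick is shorter and needs only the elementary closure properties already listed in Proposition \ref{Prop 3.6}; your reduction of $(ii)$ to $(i)$ via $D(\CC)$ matches the paper's ``dually''.
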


\begin{proof}
The idea of the proof is due to Auslander-Buchweitz known as pitchfork construction, see\cite{ABu}. We only prove the statement $(i)$. The statement $(ii)$ follows similarly, or rather dually. Let $C\in \CC.$ There exists  a non-negative integer  $i$ such that  $\Om^{in}(C)\in\nGprj\CC$. So there exists an exact sequence
\[0 \rt \Om^{in}(C) \rt P^{in-1} \rt \cdots \rt P^0 \rt C \rt 0\]
where  $P^j$ is projective, for all $j \in \lbrace 0, ... , in{\mbox{-}}1\rbrace$ and $\Om^{in}(C) \in \nGprj\CC.$ Since  $\Om^{in}(C)\in \nGprj\CC$, there exists  a $\Hom_{\La}(-,\prj\La)$-exact  exact sequence
\[0 \rt \Om^{in}(C) \rt Q^0 \rt \cdots \rt Q^{in-1} \rt S \rt 0,\]
where $Q^j$ is  projective, for all $j \in \lbrace 0, ... , in{\mbox{-}}1\rbrace$ and  $S \in \nGprj\CC$. So we get the  following commutative diagram
	
\begin{equation*}
\label{eq:n-exact-sequence-induce-n+2-angle-diagram}
\begin{tikzcd}
	{\bf P}\dar{f}&0 \rar  &\Om^{in}(C)\rar\dar[equals]&Q^0\rar\dar&\cdots\rar&Q^{in-1}\rar\dar&S\dar\rar& 0\\
	\mathbf{Q}&0\rar &\Om^{in}(C)\rar&P^{in-1}\rar&\cdots\rar&P^0\rar&C \rar & 0.
	\end{tikzcd}
\end{equation*}
	
This diagram gives a chain map between complexes,
\begin{equation*}
\xymatrix{ 0\ar[r]&  Q^0 \ar[r] \ar[d]&  \cdots \ar[r] &  Q^{in-1} \ar[r] \ar[d]&  S\ar[d]\ar[r]&0\\ \ \ 0\ar[r]& P^{in-1} \ar[r] & \cdots \ar[r] & P^0\ar[r] & C\ar[r]&0,}
\end{equation*}
which is quasi-isomorphism. Hence its mapping cone
$$0 \rt Q^0 \rt P^{in-1}\oplus Q^1 \rt \cdots \rt P^0 \oplus S \st {\phi}\rt C \rt 0,$$
is exact. So we get a short exact sequence $0 \rt \Ker \phi \rt P^0 \oplus S \st{\phi}\rt C \rt 0$ which is the desired sequence, as $P^0\oplus S\in \nGprj\CC $ and  $\rm{pd}_{\La}\ \Ker \phi < \infty.$ This completes the proof of the first assertion.

The latter assertion follows  from the first part, using the inclusion $\nGprj\CC\subseteq   \Gprj\La$, which establishes the existance of the  epimorphism $G \rt C$ as a precover.
\end{proof}

\begin{lemma}\label{ContNGor}
Let $\CC$ be an $n\Z$-cluster tilting subcategory of $\mmod \La.$
\begin{itemize}
\item[$(i)$]	If $\CC$ is  $n\Z$-$\Gprj$cluster tilting, then $\nGprj\CC$ is contravariantly finite in $\Gprj\La.$
\item[$(ii)$]  If $\CC$ is  $n\Z$-$\Ginj$cluster tilting, then $\nGinj\CC$ is covariantly finite in $\Ginj\La.$
\end{itemize}
\end{lemma}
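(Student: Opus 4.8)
The plan is to establish part $(i)$ and deduce part $(ii)$ by duality, via the identification $\nGinj\CC \simeq \nGprj D(\CC)^{\op}$ recorded earlier (or, equally, by running the dual pitchfork argument). Recall that proving $\nGprj\CC$ is contravariantly finite in $\Gprj\La$ amounts to producing, for each $G \in \Gprj\La$, a morphism $h\colon H \rt G$ with $H \in \nGprj\CC$ such that every morphism $W \rt G$ with $W \in \nGprj\CC$ factors through $h$.

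First I would exploit that $\CC$, being $n$-cluster tilting, is functorially finite in $\mmod\La$, hence in particular contravariantly finite; thus $G$ admits a right $\CC$-approximation $g\colon C \rt G$ with $C \in \CC$. Because $\nGprj\CC \subseteq \CC$ by Proposition \ref{Prop 3.6}$(i)$, every morphism to $G$ from an object of $\nGprj\CC$ already factors through $g$; the only defect is that $C$ itself need not be $n\Z$-Gorenstein projective. To cure this I would apply Proposition \ref{GProjApproxiamation}$(i)$ to $C$, obtaining a short exact sequence $0 \rt M \rt H \st{\pi}\rt C \rt 0$ with $H \in \nGprj\CC$ and $\pd_\La M < \infty$, and then set $h = g\pi \colon H \rt G$.

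It then remains to verify that $h$ is a right $\nGprj\CC$-approximation. Given $W \in \nGprj\CC$ and $\phi\colon W \rt G$, the approximation property of $g$ provides $\psi\colon W \rt C$ with $g\psi = \phi$, and I would lift $\psi$ along $\pi$. Applying $\Hom_\La(W, -)$ to $0 \rt M \rt H \st{\pi}\rt C \rt 0$, the obstruction to lifting lies in $\Ext^1_\La(W, M)$; since $W$ is Gorenstein projective (by $\nGprj\CC \subseteq \Gprj\La$) and $\pd_\La M < \infty$, the standard vanishing $\Ext^{\geq 1}_\La(W, M) = 0$ applies. Hence $\Hom_\La(W, H) \rt \Hom_\La(W, C)$ is surjective, so $\psi = \pi\widetilde{\psi}$ for some $\widetilde{\psi}\colon W \rt H$, whence $h\widetilde{\psi} = g\pi\widetilde{\psi} = g\psi = \phi$, as required.

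The main obstacle is concentrated in this last step, namely the vanishing of $\Ext^1_\La(W, M)$: everything hinges on combining the inclusion $\nGprj\CC \subseteq \Gprj\La$ with the finiteness of $\pd_\La M$ supplied by Proposition \ref{GProjApproxiamation}. Once this Ext-vanishing is secured the factorization is purely formal, and the two ingredients feeding into it — the $\CC$-approximation coming from functorial finiteness and the pitchfork sequence of Proposition \ref{GProjApproxiamation} — are both already at hand.
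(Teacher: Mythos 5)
Your argument is correct and follows essentially the same route as the paper's proof: a right $\CC$-approximation $C \rt G$ from functorial finiteness, composed with the epimorphism $H \rt C$ from Proposition \ref{GProjApproxiamation}, yields the desired right $\nGprj\CC$-approximation, with part $(ii)$ handled dually. Your only addition is to spell out the factorization step that the paper leaves as ``clearly'' and ``it is not difficult to see,'' namely the vanishing $\Ext^1_\La(W,M)=0$ for $W$ Gorenstein projective and $\pd_\La M<\infty$, which is exactly the intended justification.
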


\begin{proof}
We only prove part $(i)$.  The statement $(ii)$ follows similarly.
Let $G \in\Gprj\La$.   Since $\CC$ is functorially finite in $\mmod \La$, then there exits a right $\CC$-approximation $C \st{f}{\rt} G,$ such that $C\in\CC$. By Proposition \ref{GProjApproxiamation}, there is a short exact sequence $0 \rt L \rt G' \st{g}\rt C \rt 0$ with $G' \in \nGprj\CC$ and $\rm{pd}_{\La} \ L < \infty.$ Clearly, $g$ is a right $\nGprj\CC$-approximation of $C\in\CC.$  Now  it is not difficult to see that the map $G' \st{fg}\rt G$ is a right  $\nGprj\CC$-approximation of $G$ in $\Gprj\La.$  Hence $\nGprj\CC$ is contravariantly finite in $\Gprj\La$.	
\end{proof}

It is known that the functors $\tau_n: \underline{\CC} \rt \overline{\CC}$ and $\tau^{-1}_n:\overline{\CC} \rt \underline{\CC}$  are  mutually quasi-inverse equivalences, where $\tau_n=\tau \circ \Om^{n-1}$, $\tau^{-1}_n=\tau^{-1}\circ \Om^{-(n-1)}$, $\Om:\underline{\rm{mod}}\mbox{-}\La \rt \underline{\rm{mod}}\mbox{-}\La$ is the syzygy functor, $\Om^{-1}:\overline{\rm{mod}}\mbox{-}\La \rt \overline{\rm{mod}}\mbox{-}\La$ is the cosyzygy functor and finally $\tau$ and $\tau^{-}$ are the usual Auslander-Reiten translations, see \cite{I} and \cite{I1}.

\begin{proposition}\label{NauslanderReitenNGor}
Let $\CC$ be an $n\Z$-$G$-cluster tilting subcategory of $\mmod \La$. Then there exists the following commutative diagram
\[\xymatrix{ \underline{\CC} \ar[r]^{\tau_n} &  \overline{\CC} \ar@/^0.5pc/[l]^{\tau_n^{-1}} \\  \underline{\nGprj}\CC \ar@{^{(}->}[u] \ar[r]^{\tau_n\mid} & \overline{\nGinj}\CC \ar@{^{(}->}[u] \ar@/^0.5pc/[l]^{\tau_n^{-1}\mid}}\]	
 with equivalences in rows, where $\tau_n$ and $\tau^{-1}_n$ denote the $n$-Auslander-Reiten translations on $\CC$.
\end{proposition}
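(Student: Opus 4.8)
The plan is to exploit the fact that the top row is \emph{already} a pair of mutually quasi-inverse equivalences $\tau_n\colon\underline{\CC}\to\overline{\CC}$ and $\tau_n^{-1}\colon\overline{\CC}\to\underline{\CC}$, so that the entire statement collapses to a well-definedness claim for the restrictions plus a formal argument. First recall from Proposition \ref{TheIwangaGoreNZcluster} (applicable because $\CC$ being $n\Z$-$G$-cluster tilting forces $\La$ to be Gorenstein by Theorem \ref{3.10}) that $\nGprj\CC=\Gprj\La\cap\CC$ and $\nGinj\CC=\Ginj\La\cap\CC$. The vertical maps are the evident inclusions of the (co)stable categories; they are fully faithful because $\prj\La\subseteq\nGprj\CC$ and $\inj\La\subseteq\nGinj\CC$, so the morphisms factored out in the subcategories are exactly those factored out in the ambient categories. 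Hence it suffices to establish the two containments
$$\tau_n(\underline{\nGprj}\CC)\subseteq\overline{\nGinj}\CC,\qquad \tau_n^{-1}(\overline{\nGinj}\CC)\subseteq\underline{\nGprj}\CC.$$
Granting these, $\tau_n\mid$ is fully faithful (a restriction of a fully faithful functor) and essentially surjective (for $E\in\overline{\nGinj}\CC$ we have $\tau_n^{-1}E\in\underline{\nGprj}\CC$ with $\tau_n\tau_n^{-1}E\cong E$), hence an equivalence with quasi-inverse $\tau_n^{-1}\mid$; and the square commutes on the nose, since both paths send an object $G$ to $\tau_n G$.

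The heart of the matter is therefore the first containment, the second being dual. Since $\tau_n$ already carries $\underline{\CC}$ into $\overline{\CC}$, for $G\in\nGprj\CC$ the object $\tau_n G$ automatically lies in $\CC$, and I only need to check that it is Gorenstein injective. Writing $\tau_n=\tau\circ\Om^{n-1}$, the inner factor $\Om^{n-1}G$ is again Gorenstein projective because $\Gprj\La$ is closed under syzygies; so everything reduces to the classical-looking assertion that \emph{the Auslander--Reiten translate of a Gorenstein projective module is Gorenstein injective}.

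To prove this I would use the Nakayama functor $\nu=D\Hom_{\La}(-,\La)$ together with a complete resolution. Let $X$ be Gorenstein projective with a totally acyclic projective resolution $\mathbf{P}$, so $X$ is a cycle of $\mathbf{P}$. Applying $\Hom_{\La}(-,\La)$ turns $\mathbf{P}$ into a totally acyclic complex of projective $\La^{\op}$-modules, and the exact duality $D$ then produces a totally acyclic complex $\nu\mathbf{P}$ of injective $\La$-modules whose cycles are Gorenstein injective. Feeding the minimal projective presentation of $X$ extracted from $\mathbf{P}$ into the defining four-term exact sequence $0\to\tau X\to\nu P_1\to\nu P_0\to\nu X\to 0$ identifies $\tau X$, up to injective summands, with a cycle of $\nu\mathbf{P}$, hence as a Gorenstein injective module. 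Consequently $\tau_n G=\tau(\Om^{n-1}G)\in\Ginj\La$, and combined with $\tau_n G\in\CC$ this gives $\tau_n G\in\Ginj\La\cap\CC=\nGinj\CC$. The reverse containment for $\tau_n^{-1}$ follows symmetrically, using $\nu^{-1}=\Hom_{\La}(D\La,-)$ and complete injective coresolutions.

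The step I expect to be the main obstacle is precisely this compatibility of $\tau$ with the Gorenstein structure: one must verify carefully that $\nu$ sends a complete projective resolution to a complete injective coresolution, and must track indices so that the cycle of $\nu\mathbf{P}$ produced genuinely represents $\tau X$ in the costable category $\overline{\rm{mod}}\mbox{-}\La$ (that is, up to injective summands, which is exactly the level at which $\overline{\nGinj}\CC$ lives). Once this module-level statement is secured, passing through $\Om^{n-1}$ and the intersection descriptions $\nGprj\CC=\Gprj\La\cap\CC$, $\nGinj\CC=\Ginj\La\cap\CC$ is routine, and the equivalence of the bottom row together with the commutativity of the square is the purely formal consequence recorded in the first paragraph.
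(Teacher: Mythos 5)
Your proposal is correct and follows essentially the same route as the paper: both reduce the statement to checking that $\tau_n=\tau\circ\Om^{n-1}$ restricts on objects, using the identifications $\nGprj\CC=\Gprj\La\cap\CC$ and $\nGinj\CC=\Ginj\La\cap\CC$ of Proposition \ref{TheIwangaGoreNZcluster} together with the facts that the syzygy functor preserves Gorenstein projectives and that $\tau$ carries Gorenstein projectives to Gorenstein injectives. The only difference is that where the paper simply cites \cite[Proposition 2.2.7]{C} for this last fact, you inline its standard proof via the Nakayama functor applied to a complete resolution, which is sound.
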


\begin{proof}
It is enough to show that $n$-Auslander-Reiten translations can be restricted to arise equivalences in the bottom row.
Let $G\in \nGprj\CC$. Since $\tau_n(G)\in\CC$, by Proposition \ref{TheIwangaGoreNZcluster}, it suffices to show that  $\tau_n(G)$ is Gorenstein injective. This follows from the facts that syzygy functor maps Gorenstein projectives to Gorenstein projectives and $\tau$ maps Gorenstein projectives to Gorenstein injectives, see
\cite[Proposition 2.2.7]{C}.
\end{proof}

Since $\Gprj\La$ is  an extension closed subcategory of $\mmod \La$, it inherits an exact  structure making the inclusion $\Gprj\La \hookrightarrow \mmod \La$ into an exact functor. It is known that, under this exact structure, $\Gprj\La$ becomes a Frobenius exact category with projective-injective objects coincide with $\prj\La.$ So the stable category $\underline{\Gprj}\La$ has a natural structure of a triangulated category, see \cite[Chapter 1]{Ha}. In fact, a short exact sequence $0 \rt G^1\st{f} \rt G^2 \st{g}\rt G^3 \rt 0$ with all terms in $\Gprj\La$ induces the triangle $G^1 \st{\underline{f}}\rt G^2 \st{\underline{g}}\rt G^3 \st{\underline{h}}\rt \Sigma G^1$ in $\underline{\Gprj}\La$ , and conversely  any triangle in $\underline{\Gprj}\La$ is obtained in this way; see also Lemma 1.2 of \cite{CZ}.

\begin{corollary}\label{FnctoriallyFiniNGproj}
Let $\La$ be a Gorenstein algebra and $\CC$ be an $n\Z$-cluster tilting subcategory of $\mmod\La.$ Then the following hold.
\begin{itemize}
\item[$(i)$]  $\underline{\nGprj}\CC$ is functorialy finite in $\underline{\Gprj}\La$.
\item[$(ii)$] Dually, $\overline{\nGinj}\CC$ is functorialy finite in $\overline{\Ginj}\La.$
\end{itemize}	
\end{corollary}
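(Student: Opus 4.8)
The plan is to obtain functorial finiteness in each case by combining the two approximation results of Lemma \ref{ContNGor} with the Auslander--Reiten equivalences of Proposition \ref{NauslanderReitenNGor}. Since $\La$ is Gorenstein, $\CC$ is $n\Z$-$G$-cluster tilting by Proposition \ref{TheIwangaGoreNZcluster}, so both parts of Lemma \ref{ContNGor} are available: $\nGprj\CC$ is contravariantly finite in $\Gprj\La$ and $\nGinj\CC$ is covariantly finite in $\Ginj\La$. The key observation is that each of these supplies only ``one half'' of what we want — contravariant finiteness for $(i)$ and covariant finiteness for $(ii)$ — and that the missing halves are produced by transporting along an equivalence $\underline{\Gprj}\La \simeq \overline{\Ginj}\La$ that matches $\underline{\nGprj}\CC$ with $\overline{\nGinj}\CC$.

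First I would pass the module-level approximations to the stable categories. A right $\nGprj\CC$-approximation $f\colon G' \rt G$ in $\Gprj\La$ descends to a right $\underline{\nGprj}\CC$-approximation $\underline{f}\colon \underline{G'} \rt \underline{G}$ in $\underline{\Gprj}\La$: given any $\underline{h}\colon \underline{X} \rt \underline{G}$ with $X \in \nGprj\CC$, lift it to $h\colon X \rt G$ in $\mmod\La$, factor $h = f s$ through $f$ by the approximation property, and project back to get $\underline{h} = \underline{f}\,\underline{s}$. Hence Lemma \ref{ContNGor}$(i)$ yields that $\underline{\nGprj}\CC$ is contravariantly finite in $\underline{\Gprj}\La$, and dually Lemma \ref{ContNGor}$(ii)$ yields that $\overline{\nGinj}\CC$ is covariantly finite in $\overline{\Ginj}\La$.

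Next I would exhibit the equivalence. Because $\La$ is Gorenstein, the stable Auslander--Reiten translation $\tau$ restricts to a triangle equivalence $\tau\colon \underline{\Gprj}\La \rt \overline{\Ginj}\La$ (it sends Gorenstein projectives to Gorenstein injectives, cf. \cite[Proposition 2.2.7]{C}), and the syzygy $\Om^{n-1}$ is an autoequivalence of $\underline{\Gprj}\La$; therefore $\tau_n = \tau \circ \Om^{n-1}$ is an equivalence $\underline{\Gprj}\La \rt \overline{\Ginj}\La$. By Proposition \ref{NauslanderReitenNGor} this very functor carries $\underline{\nGprj}\CC$ onto $\overline{\nGinj}\CC$. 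Since an equivalence preserves both left and right approximations, functorial finiteness of a subcategory is equivalent to that of its image; thus the contravariant finiteness of $\underline{\nGprj}\CC$ transfers to contravariant finiteness of $\overline{\nGinj}\CC$, which together with the covariant finiteness already proved gives $(ii)$; symmetrically, the covariant finiteness of $\overline{\nGinj}\CC$ transfers back along $\tau_n^{-1}$ to covariant finiteness of $\underline{\nGprj}\CC$, which together with the contravariant finiteness gives $(i)$.

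The main obstacle is the third step: one must be careful that $\tau_n$ is genuinely an equivalence of the \emph{ambient} triangulated categories $\underline{\Gprj}\La$ and $\overline{\Ginj}\La$, and not merely of the cluster tilting subcategories. This is where the Gorenstein hypothesis is essential, since it guarantees both that $\tau$ restricts to an equivalence between $\underline{\Gprj}\La$ and $\overline{\Ginj}\La$ (both being models of the singularity category) and that $\Om$ is invertible on $\underline{\Gprj}\La$. One should also check that the $n$-Auslander--Reiten translation of Proposition \ref{NauslanderReitenNGor} agrees with the composite $\tau\circ\Om^{n-1}$ on the relevant objects, so that the identification $\tau_n(\underline{\nGprj}\CC) = \overline{\nGinj}\CC$ is compatible with the ambient equivalence. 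Granting these compatibilities, the transfer of approximations is routine.
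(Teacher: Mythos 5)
Your proof is correct, but it reorganizes the argument in a way that genuinely differs from the paper's, so a comparison is worthwhile. Both arguments open identically: the module-level right approximations of Lemma \ref{ContNGor}$(i)$ descend to the stable category exactly as you describe, giving contravariant finiteness of $\underline{\nGprj}\CC$ in $\underline{\Gprj}\La$. The divergence is in how the missing covariant half is produced. The paper stays entirely inside its own established results: by Proposition \ref{GProjApproxiamation} the subcategory $\overline{\nGinj}\CC$ is covariantly finite in $\overline{\CC}$; the equivalence $\tau_n\colon\underline{\CC}\to\overline{\CC}$ of Proposition \ref{NauslanderReitenNGor}, restricted to the bottom row of its diagram, then gives covariant finiteness of $\underline{\nGprj}\CC$ in $\underline{\CC}$; finally a left $\underline{\nGprj}\CC$-approximation of an arbitrary $G\in\underline{\Gprj}\La$ is manufactured by composing a left $\underline{\CC}$-approximation of $G$ (available because $\CC$ is functorially finite in $\mmod\La$) with a left $\underline{\nGprj}\CC$-approximation taken inside $\underline{\CC}$. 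You instead promote $\tau_n=\tau\circ\Om^{n-1}$ to an equivalence of the \emph{ambient} categories $\underline{\Gprj}\La\simeq\overline{\Ginj}\La$ and transfer the descended form of Lemma \ref{ContNGor}$(ii)$ across it; this treats $(i)$ and $(ii)$ symmetrically in one stroke and avoids both the composition-of-approximations step and any appeal to the functorial finiteness of $\CC$ in $\mmod\La$. The price is one input the paper never states: that $\tau$ restricts to an equivalence $\underline{\Gprj}\La\to\overline{\Ginj}\La$. You correctly flag this as the main obstacle, and it is surmountable by standard facts: the classical Auslander--Reiten stable equivalence $\underline{\rm{mod}}\mbox{-}\La\simeq\overline{\rm{mod}}\mbox{-}\La$ of \cite{AR} restricts by \cite[Proposition 2.2.7]{C} and its dual, and $\Om$ is invertible on $\underline{\Gprj}\La$ because $\Gprj\La$ is a Frobenius exact category; moreover $\tau_n$ on $\underline{\CC}$ is by definition the restriction of this composite, so the compatibility with Proposition \ref{NauslanderReitenNGor} that you ask for is automatic. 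One small correction to your commentary: neither of these two facts actually requires the Gorenstein hypothesis --- both hold over every artin algebra; Gorensteinness enters earlier, in guaranteeing via Proposition \ref{TheIwangaGoreNZcluster} that $\CC$ is $n\Z$-G-cluster tilting, so that Lemma \ref{ContNGor} and Proposition \ref{NauslanderReitenNGor} apply at all.
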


\begin{proof}
We only prove $(i)$. The statement $(ii)$ follows dually. According to Lemma \ref{ContNGor}, $\nGprj\CC$ is contravariantly finite  in $\Gprj\La$. This, in turn, implies that $\underline{\nGprj}\CC$  is contravariantly finite in $\underline{\Gprj}\La.$ So  it remains to show that  $\underline{\nGprj}\CC$ is covariantly finite in $\underline{\Gprj}\La$.
Using Proposition \ref{GProjApproxiamation}, one can see that $\overline{\nGinj}\CC$ is covariantly finite in $\overline{\CC}.$ Hence in view of  Proposition \ref{NauslanderReitenNGor}, $\underline{\nGprj}\CC$ is covariantly finite in $\underline{\CC}$. Let $G\in\underline{\Gprj}\La$. Let $G\st{\underline{g}} \rt C $ and $C \st{\underline{f}} \rt G'$ be the left $\underline{C}$-approximation of $G$ and the  left $\underline{\nGprj}\CC$-approximation of $C$ in $\underline{\CC}$, respectively.  It is not difficult to see that $G \st{\underline{fg}} \rt G'$ is left $\underline{\nGprj}\CC$-approximation of $G$ in $\underline{\Gprj}\La$.
\end{proof}

The following result recover \cite[Corollary 7.3]{K}  by using the language of $n\Z$-Gorenstein projective modules.

\begin{proposition}\label{LeftRightOrtogonal}
Let $\La$ be a Gorenstein algebra and $\CC$ be an $n\Z$-cluster tilting subcategory of $\mod\La$. Then
$\nGprj\CC={}^{\bot_{n}}(\nGprj\CC)\cap\Gprj\La=(\nGprj\CC)^{\bot_{n}}\cap \Gprj\La.$
\end{proposition}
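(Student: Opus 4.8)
The plan is to dispose of the two trivial inclusions first and then prove the two reverse ones; throughout I freely use that, since $\La$ is Gorenstein, Proposition \ref{TheIwangaGoreNZcluster} gives $\nGprj\CC=\Gprj\La\cap\CC$, together with the defining equalities $\CC={}^{\perp_n}\CC=\CC^{\perp_n}$. The inclusions $\nGprj\CC\subseteq{}^{\perp_n}(\nGprj\CC)\cap\Gprj\La$ and $\nGprj\CC\subseteq(\nGprj\CC)^{\perp_n}\cap\Gprj\La$ are immediate: any $M\in\nGprj\CC$ lies in $\Gprj\La$ by Proposition \ref{Prop 3.6}$(i)$ and lies in $\CC$, so $\Ext^i_\La(M,\nGprj\CC)=0=\Ext^i_\La(\nGprj\CC,M)$ for $0<i<n$ because $\nGprj\CC\subseteq\CC$ and $\CC={}^{\perp_n}\CC=\CC^{\perp_n}$. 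Hence only the reverse inclusions remain, and since the modules in question already lie in $\Gprj\La$, it suffices to show that such a module lies in $\CC$.

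First I treat ${}^{\perp_n}(\nGprj\CC)\cap\Gprj\La\subseteq\nGprj\CC$. Let $M\in\Gprj\La$ with $\Ext^i_\La(M,G)=0$ for all $G\in\nGprj\CC$ and $0<i<n$; I show $M\in{}^{\perp_n}\CC=\CC$. Fix $C\in\CC$. By Proposition \ref{GProjApproxiamation}$(i)$ there is a short exact sequence $0\rt K\rt G\rt C\rt 0$ with $G\in\nGprj\CC$ and $\pd_\La K<\infty$. Applying $\Hom_\La(M,-)$ yields the exact segment $\Ext^i_\La(M,G)\rt\Ext^i_\La(M,C)\rt\Ext^{i+1}_\La(M,K)$. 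The left-hand term vanishes by hypothesis, while the right-hand term vanishes because $M$ is Gorenstein projective and $\pd_\La K<\infty$ (a routine induction on $\pd_\La K$ using $\Ext^{>0}_\La(M,\prj\La)=0$). Thus $\Ext^i_\La(M,C)=0$ for all $0<i<n$, so $M\in{}^{\perp_n}\CC=\CC$ and therefore $M\in\Gprj\La\cap\CC=\nGprj\CC$.

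The remaining inclusion $(\nGprj\CC)^{\perp_n}\cap\Gprj\La\subseteq\nGprj\CC$ is the main obstacle. Mimicking the previous argument for $M\in\Gprj\La$ with $\Ext^i_\La(\nGprj\CC,M)=0$ and applying $\Hom_\La(-,M)$ to the same precover $0\rt K\rt G\rt C\rt 0$ produces the term $\Ext^{i-1}_\La(K,M)$ with $K$ of finite projective dimension and $M$ Gorenstein projective; this term need not vanish (indeed $\Ext^{>0}_\La(\text{finite pd},\Gprj\La)$ can be non-zero), so the direct approach breaks down. I would instead pass to the stable category $\underline{\Gprj}\La$, where for $G,M\in\Gprj\La$ and $i\ge 1$ one has $\Ext^i_\La(G,M)\cong\Hom_{\underline{\Gprj}\La}(G,\Sigma^i M)$, and invoke Serre duality. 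Concretely, the $n$-Auslander–Reiten translation of Proposition \ref{NauslanderReitenNGor}, the functorial finiteness of $\underline{\nGprj}\CC$ in $\underline{\Gprj}\La$ from Corollary \ref{FnctoriallyFiniNGproj}, and the equivalence $\Om^n\colon\underline{\nGprj}\CC\rt\underline{\nGprj}\CC$ of Proposition \ref{Prop 3.6}$(v)$ together show that $\underline{\nGprj}\CC$ is stable under the relevant power of the suspension and is preserved by the Serre functor up to a shift in $n\Z$; Serre duality then interchanges the right-orthogonality condition $\Ext^i_\La(\nGprj\CC,M)=0$ with a left-orthogonality condition of the form $\Ext^{j}_\La(M,\nGprj\CC)=0$, reducing this case to the one already settled. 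The delicate point I expect to require the most care is the precise index bookkeeping in the $n$-Auslander–Reiten/Serre duality needed to match the two orthogonality ranges $0<i<n$. Alternatively, once Theorem \ref{NClusterTilting}$(1)$ is available, both reverse inclusions follow at once by translating the exact-category identity $\nGprj\CC=(\nGprj\CC)^{\perp_n}_{\Gprj\La}={}^{\perp_n}_{\Gprj\La}(\nGprj\CC)$ through the isomorphism $\Ext^i_{\Gprj\La}(X,Y)\cong\Ext^i_\La(X,Y)$ for $i\ge 1$ and $X,Y\in\Gprj\La$.
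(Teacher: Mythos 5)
Your first half is correct and coincides with the paper's own proof: the two trivial inclusions, and the argument that ${}^{\perp_n}(\nGprj\CC)\cap\Gprj\La\subseteq\nGprj\CC$ via the precover $0\rt L\rt G\rt C\rt 0$ of Proposition \ref{GProjApproxiamation} together with $\Ext^{i}_\La(X,L)=0$ for $X$ Gorenstein projective and $\pd_\La L<\infty$, are exactly the paper's steps, and you correctly diagnose why the naive dual argument fails for the second equality (the connecting map runs through $\Ext^{i-1}_\La(L,X)$, which need not vanish). The genuine gap is that the second equality, $(\nGprj\CC)^{\perp_n}\cap\Gprj\La\subseteq\nGprj\CC$, is never actually proved. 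Your Serre-duality plan rests on the unverified claim that the Serre functor of $\underline{\Gprj}\La$ preserves $\underline{\nGprj}\CC$ up to a shift in $n\Z$; but Proposition \ref{NauslanderReitenNGor} only provides an equivalence $\underline{\nGprj}\CC\simeq\overline{\nGinj}\CC$ landing in the costable category of Gorenstein \emph{injectives}, and converting this into Serre-stability of $\underline{\nGprj}\CC$ inside $\underline{\Gprj}\La$ (via the Nakayama functor and the correct syzygy shifts) is itself a nontrivial statement living exactly where your ``index bookkeeping'' worry sits --- asserting that duality ``reduces this case to the one already settled'' is a plan, not a proof. Moreover, your fallback is circular: in the paper, Theorem \ref{NClusterTilting}$(i)$ is deduced from Proposition \ref{LeftRightOrtogonal} (through Proposition \ref{StableNGprj}), so it cannot be invoked to prove it.

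For comparison, the paper handles this inclusion by an elementary but substantial induction that your proposal does not anticipate. Given $X\in(\nGprj\CC)^{\perp_n}\cap\Gprj\La$, it takes a minimal $\CC$-coresolution $0\rt X\rt C_1\rt\cdots\rt C_n\rt 0$ (\cite[Theorem 2.2.3]{I}) and inducts on its length: one pulls back along an $\nGprj\CC$-precover $f\colon G\rt C_j$ whose kernel has finite projective dimension; the hypothesis $X\in(\nGprj\CC)^{\perp_n}$, combined with dimension shifting along the coresolution, makes the relevant $\Ext^1_\La$ vanish, so the pulled-back row splits, and the minimality of the coresolution forces any direct summand $H$ of $X$ having no nonzero summand in $\nGprj\CC$ to be a summand of a module of finite projective dimension --- hence projective (being Gorenstein projective), hence zero, giving $X\in\CC$ and so $X\in\Gprj\La\cap\CC=\nGprj\CC$ by Proposition \ref{TheIwangaGoreNZcluster}. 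To complete your write-up you must either reproduce this pullback induction or fully establish the Serre-stability claim; as it stands, the heart of the proposition is missing.
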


\begin{proof}
First we show that
$$\nGprj\CC={}^{\bot_{n}}(\nGprj\CC)\cap\Gprj\La.$$
Clearly $\nGprj\CC\subseteq {}^{\bot_{n}}(\nGprj\CC)\cap \Gprj\La$. For the converse, let $X\in {}^{\bot_{n}}(\nGprj\CC)\cap\Gprj\La$. Since $\nGprj\CC=\Gprj\La\cap\CC$, it is enough to show that $X\in \CC$. By Proposition \ref{GProjApproxiamation}, for every $C\in\CC$, there is a short exact sequence $0\longrightarrow L\longrightarrow G\longrightarrow C\longrightarrow 0$, such that $G\in\nGprj\CC$ and $\rm{pd}_{\La}  L < \infty.$ By applying $\Hom_{\La}(X, -)$
on the above short exact sequence, we get the exact sequence  $$\Ext^i_{\La}(X,G)\longrightarrow \Ext^i_{\La}(X,C)\longrightarrow\Ext^{i+1}_{\La}(X,L).$$
Note that since $\rm{pd}_{\La} L< \infty$, for every $i>1$, $\Ext^i(X,L)=0$. Also by assumption for all $i\in{\lbrace1,...,n-1\rbrace}$, $\Ext^{i}_{\La}(X,G)=0$. So for all $i\in\lbrace 1,...,n-1\rbrace$, $\Ext^i_{\La}(X,C)=0$. Hence $X\in\CC$. This complete the proof of the  first equality.
Now we show that $\nGprj\CC=(\nGprj\CC)^{\bot_{n}}\cap \Gprj\La$.  Clearly $\nGprj\CC\subseteq (\nGprj\CC)^{\bot_{n}}\cap \Gprj\La$.
For the reverse inclusion, let $X\in(\nGprj\CC)^{\bot_{n}}\cap \Gprj\La$. By  \cite[Theorem 2.2.3]{I}, there exists a minimal $\CC$-resolution
$$\epsilon: 0 \rt X \rt C_1 \st{d_1}\rt \cdots\rt C_{n-1} \st{d_{n-1}} \rt C_n \rt 0,$$
where   $C_i\in \CC.$ By induction on $n$, we show that $X\in\CC$. This implies that $X\in\nGprj\CC$. The case $n=1$ is trivial. So assume that $n=2$. By Proposition  \ref{GProjApproxiamation}, there exists a short exact sequence
 $0 \rt \Ker f_2 \rt G_2 \st{f_2}\rt C_2 \rt 0$, where $f_2$ is a right minimal $\Gprj\La$-approximation, $G_2\in\nGprj\CC$ and $\pd_{\La} \Ker f_2<\infty$. Consider the following pull-back diagram
 \[\xymatrix{
& & & 0 \ar[d]  & 0 \ar[d] &  \\
& & & \Ker f_2 \ar[d] \ar@{=}[r] & \Ker f_2 \ar[d] & \\
& 0 \ar[r] & X\ar@{=}[d] \ar[r] & U \ar[d]^g \ar[r] & G_2 \ar[d]^{f_2} \ar[r] & 0 \\
& 0 \ar[r] & X\ar[r] & C_1 \ar[d]  \ar[r] & C_2 \ar[r]\ar[d] & 0 \\
& & & 0 &  0 & }\]
 Since $\Ext^1_\La(G_2,X)=0$, the middle row in the above diagram splits. Hence $U\simeq G_2\oplus X$. Since $\Ker f_{2}=\Ker g$, $\pd_{\La} \Ker g < \infty$ and hence $g$ is a right $\Gprj\La$-approximation. On the other
 hand, since $C_1\in\CC$, there exists a right minimal  $\Gprj\La$-approximation $G_1\rt  C_1$ such that $G_1\in\nGprj\CC$. Set $X=G\oplus H$, where $G\in\nGprj\CC$ and $H$ has no non-zero summand in $\nGprj\CC$. By the minimal property of $G_1\rt  C_1$, $H$ should be a direct summand of $\Ker f_2$. Hence $\pd_\La H< \infty$. This implies that $H$ is projective, as it is Gorenstein projective of finite projective dimension.
 Hence $H=0$ and $X\in \CC$.
 Now assume inductively that $n > 2$. Put $\Ker d_i=K_i$, for $i\in \lbrace 2,...,n-1\rbrace$. We claim that for every $i\in\lbrace 2,...,n{\mbox{-}}1\rbrace$, there exists a right $\Gprj\La$-approximation $G^{\prime}_{i}\rt K_i$ such that $G^{\prime}_{i}\in \nGprj\CC$ and its kernel is of finite pojective dimension.

 If we prove the claim, in particular, for $K_2$ we get a right $\Gprj\La$-approximation $G^{\prime}_{2}\st{f^{\prime}_2}\rt K_2$ with $G^{\prime}_{2}\in\nGprj\CC$ and $\pd_\La\Ker f_2< \infty$. Now by considering pull-back diagram of short exact sequence $0\rt X \rt C_1\rt K_2\rt 0$ along with $G^{\prime}_{2}\st{f^{\prime}_2}\rt K_2$, similar to the case $n=2$, we get the result.

For the proof of the claim, we use inverse induction on $i$. To do this, assume that $i=n-1$. By applying $\Hom_\La(\nGprj\CC,-)$ on the short exact sequence $0\rt K_{n-1}\rt C_{n-1}\rt C_n\rt 0,$ we see that this sequence  is  $\Hom_\La(\nGprj\CC,-)$-exact. This follows using the fact that $X \in (\nGprj\CC)^{\bot_{n}}$  and then applying a dimension-shifting argument to the corresponding short exact sequences of $\epsilon$.

In particular $\Ext^1_\La(G',K_{n-1})=0$, for every $G'\in\nGprj\CC$.  Consider the following pull-back diagram $(\dagger)$
\[\xymatrix{
& & & 0 \ar[d]  & 0 \ar[d] &  \\
& & & \Ker f_n \ar[d] \ar@{=}[r] & \Ker f_n \ar[d] & \\
& 0 \ar[r] & K_{n-1}\ar@{=}[d] \ar[r] & U \ar[d]^{g_n} \ar[r] & G_n \ar[d]^{f_n} \ar[r] & 0 \\
& 0 \ar[r] & K_{n-1}\ar[r] & C_{n-1} \ar[d]  \ar[r] & C_n \ar[r]\ar[d] & 0 \\
& & & 0 &  0 & }\]
where $f_n$ is a right $\nGprj\CC$-approximation of $C_n $ in $\CC$.  Since $\Ext^1_\La(G_n,K_{n-1})=0$, then $U=K_{n-1}\oplus G_n$. Now since the short exact sequence $0\rt\Ker f_n\rt U\rt C_{n-1}\rt 0$ is $\Hom_\La(\nGprj\CC,-)$-exact, we get the following commutative diagram with exact columns and rows.
\[\xymatrix{& 0 \ar[d] & 0 \ar@{-->}[d] & 0 \ar[d]& &\\
0 \ar@{-->}[r] & \Om(\Ker f_n) \ar[d] \ar@{-->}[r] & W_{n-1} \ar@{-->}[d] \ar@{-->}[r] & \rm{Ker}\ f_{n-1} \ar[d] \ar@{-->}[r] & 0\\
0 \ar[r] & P \ar[d] \ar[r] & P\bigoplus G_{n-1}\ar@{-->}[d] \ar[r] & G_{n-1} \ar[d]^{f_{n-1}} \ar[r] & 0\\
0 \ar[r] & \Ker f_n \ar[d] \ar[r] & K_{n-1}\oplus G_n\ar@{-->}[d] \ar[r] & C_{n-1} \ar[d] \ar[r] & 0\\
& 0  & 0  & 0 & }\]
The short exact sequence in the middle column, implies the first step of the induction.\\
Now suppose that the claim is proved for $i\in \lbrace 2,...,n-1\rbrace$. The proof for the case $i-1$ is analogous to the first step of the claim, we replace   the short exact sequence $0\rt\Ker f_n\rt G_n\rt C_n\rt 0$ in the diagram $(\dagger)$ with the short exact sequence $0\rt \Ker f^{\prime}_n\rt G^{\prime}_i\rt K_i\rt 0$, where $G^{\prime}_i\in\nGprj\CC$ and $\pd_\La \Ker f^{\prime}_i<\infty$.
\end{proof}
\begin{proposition}\label{StableNGprj}
Let $\La$  be a Gorenstein algebra and $\CC$ be an $n\Z$-cluster tilting subcategory of $\mod\La$. Then the subcategory $\underline{\nGprj}\CC$ is an $n$-cluster tilting subcategory of the triangulated category $\underline{\Gprj}\La$.
\end{proposition}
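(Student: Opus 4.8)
The plan is to verify the two defining properties of an $n$-cluster tilting subcategory of the triangulated category $\underline{\Gprj}\La$: functorial finiteness, and the pair of orthogonality identities $\underline{\nGprj}\CC={}^{\perp_n}(\underline{\nGprj}\CC)=(\underline{\nGprj}\CC)^{\perp_n}$, where the orthogonals are now formed inside $\underline{\Gprj}\La$ using its suspension $\Sigma=\Om^{-1}$. Functorial finiteness is already in hand from Corollary \ref{FnctoriallyFiniNGproj}, so the real content is the two orthogonality equalities, and my strategy is to reduce each of them to the corresponding statement in $\mmod\La$ established in Proposition \ref{LeftRightOrtogonal}.

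The bridge between the triangulated and the module-theoretic pictures is the standard identification due to Buchweitz \cite{Bu}: for Gorenstein projective modules $G$ and $X$ and every $i\geq 1$ one has $\underline{\Hom}_{\La}(G,\Sigma^i X)\cong\Ext^i_{\La}(G,X)$, where $\Sigma^i X=\Om^{-i}X$ is the $i$-th cosyzygy. First I would record this isomorphism explicitly for objects of $\nGprj\CC$ and of $\Gprj\La$, observing that in the defining orthogonality conditions only positive shifts in the range $0<i<n$ appear, so the comparison applies cleanly and coincides with ordinary $\Ext$ there.

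With this translation the computation is immediate. For an object $X$ of $\Gprj\La$, regarded in $\underline{\Gprj}\La$, membership in $(\underline{\nGprj}\CC)^{\perp_n}$ means $\underline{\Hom}_{\La}(\nGprj\CC,\Sigma^i X)=0$ for all $0<i<n$, which by the displayed isomorphism is exactly $\Ext^i_{\La}(\nGprj\CC,X)=0$ for $0<i<n$, i.e. $X\in(\nGprj\CC)^{\perp_n}\cap\Gprj\La$. By Proposition \ref{LeftRightOrtogonal} this intersection equals $\nGprj\CC$, so the image of $X$ lies in $\underline{\nGprj}\CC$, while the reverse inclusion is clear. The identity ${}^{\perp_n}(\underline{\nGprj}\CC)=\underline{\nGprj}\CC$ follows by the symmetric argument applied to $\Ext^i_{\La}(X,\nGprj\CC)$ together with the left-orthogonal equality of Proposition \ref{LeftRightOrtogonal}. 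Finally, since a right or left orthogonal in a triangulated category is automatically closed under direct summands, no separate verification of summand-closure is needed.

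The step I expect to require the most care is the translation in the second paragraph: one must confirm that the suspension of $\underline{\Gprj}\La$ is indeed the cosyzygy functor $\Om^{-1}$, and that the stable-$\Hom$/$\Ext$ comparison is invoked only for $0<i<n$, where it agrees with ordinary $\Ext$. Outside this range the two genuinely differ, since $\underline{\Hom}_{\La}(G,\Sigma^i X)$ computes Tate cohomology, but such indices never enter the orthogonality conditions. Everything else is a formal consequence of Proposition \ref{LeftRightOrtogonal} and Corollary \ref{FnctoriallyFiniNGproj}.
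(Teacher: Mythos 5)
Your proposal is correct and takes essentially the same route as the paper's proof: functorial finiteness is quoted from Corollary \ref{FnctoriallyFiniNGproj}, and the two orthogonality identities are reduced to Proposition \ref{LeftRightOrtogonal} via the isomorphisms $\underline{\Hom}_{\La}(X,\Sigma^i G')\simeq \Ext^i_{\La}(X,G')$ and $\underline{\Hom}_{\La}(G',\Sigma^i X)\simeq \Ext^i_{\La}(G',X)$ for $1\leq i\leq n-1$, which the paper cites from \cite[Subsection 2.1]{I1} rather than from Buchweitz. The remaining differences (attribution of the stable-$\Hom$/$\Ext$ comparison, your remarks on Tate cohomology outside the range $0<i<n$ and on summand-closure) are cosmetic.
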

\begin{proof}
By definition, we need to show that $(i)$ $\underline{\nGprj}\CC$ is functorially finite in $\underline{\Gprj}\La$ and
\begin{align*}
(ii)\  \underline{\nGprj}\CC=&\{X\in \underline{\Gprj}\La \mid \forall i\in\{1,\dots,n-1\},\
\underline{\rm{Hom}}_{\La}(X, \Sigma^i\nGprj\CC)=0\}\\
=&\{X\in \underline{\Gprj}\La \mid \forall i\in\{1,\dots,n-1\},\ \underline{\rm{Hom}}_{\La}( \nGprj\CC, \Sigma^i X)=0\},
\end{align*}
where $\Sigma$ denotes the quasi-isomorphism of the syzygy functor $\Om:\underline{\Gprj}\La\rt \underline{\Gprj}\La.$\\
The fact that $\underline{\nGprj}\CC$ is functorially finite in $\underline{\Gprj}\La$ follows from Corollary  \ref{FnctoriallyFiniNGproj}. For $(ii)$ note that by \cite[Subsection 2.1]{I1}, for every $i\in\lbrace 1,...,n-1\rbrace$, we get the following isomorphisms
\[\underline{\rm{Hom}}_{\La}(X, \Sigma^i G')\simeq \underline{\rm{Hom}}_{\La}(\Om^i (X),  G') \simeq \Ext^i_\La(X, G'), \]
\[\underline{\rm{Hom}}_{\La}(G^{\prime}, \Sigma^i X)\simeq \underline{\rm{Hom}}_{\La}(\Om^i (G'), X) \simeq \Ext^i_\La(G', X), \]
for all $G'\in \nGprj\CC$ and $X\in\Gprj\La$. Hence the desired equalities follow from the above proposition.
\end{proof}

\begin{theorem}\label{NClusterTilting}
Let $\La$ be a Gorenstein algebra and $\CC$ be an $n\Z$-cluster tilting subcategory of $\mmod \La.$ The following hold.
\begin{itemize}
\item [$(i)$] The subcategory $\nGprj\CC$ is an $n\Z$-cluster tilting subcategory of $\Gprj\La.$
\item [$(ii)$] The subcategory $\underline{\nGprj}\CC$ is an $n\Z$-cluster tilting subcategory of $\underline{\Gprj}\La.$		
\end{itemize}
\end{theorem}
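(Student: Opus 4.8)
The plan is to read parts (i) and (ii) as an assembly of the preceding results, the only genuine work being the transfer of information between the exact category $\Gprj\La$, its stable category $\underline{\Gprj}\La$, and the ambient $\mmod\La$. The organizing observation is that for $X,Y\in\Gprj\La$ and $i\geqslant 1$ one has $\Ext^i_{\Gprj\La}(X,Y)\cong\Ext^i_{\La}(X,Y)$, where the left-hand side is the Yoneda $\Ext$ of the inherited exact structure. Indeed $\Gprj\La$ is closed under extensions, so the two groups agree in degree $1$; and since the projective objects of the Frobenius category $\Gprj\La$ are exactly $\prj\La$ and the syzygy of a Gorenstein projective is again Gorenstein projective, dimension shifting inside $\Gprj\La$ reduces $\Ext^i_{\Gprj\La}$ to $\Ext^1_{\Gprj\La}(\Om^{i-1}X,Y)=\Ext^1_{\La}(\Om^{i-1}X,Y)=\Ext^i_{\La}(X,Y)$. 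Consequently the orthogonal classes formed inside $\Gprj\La$ coincide with the orthogonal classes of $\mmod\La$ intersected with $\Gprj\La$.

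For part (i) I would check the defining conditions of an $n\Z$-cluster tilting subcategory of the exact category $\Gprj\La$ one by one. For functorial finiteness, Lemma \ref{ContNGor}(i) already supplies contravariant finiteness of $\nGprj\CC$ in $\Gprj\La$, so only covariant finiteness remains; here I would lift the covariant finiteness of $\underline{\nGprj}\CC$ in $\underline{\Gprj}\La$ furnished by Corollary \ref{FnctoriallyFiniNGproj}(i): given $G\in\Gprj\La$ with stable left approximation $\underline{g}\colon G\rt G'$, the map $(g,e)\colon G\rt G'\oplus I$, with $e$ a monomorphism into a projective object $I$, is an honest left $\nGprj\CC$-approximation in $\Gprj\La$, the inclusion $\prj\La\subseteq\nGprj\CC$ of Proposition \ref{Prop 3.6}(i) absorbing the maps that factor through projectives. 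For the orthogonality $\nGprj\CC=(\nGprj\CC)^{\perp_n}={}^{\perp_n}(\nGprj\CC)$ computed inside $\Gprj\La$, this is exactly Proposition \ref{LeftRightOrtogonal} rephrased through the $\Ext$-identification above. Finally, for the $n\Z$ condition, $\nGprj\CC\subseteq\CC$ and $\Ext^i_{\La}(\CC,\CC)=0$ for $i\notin n\Z$ give $\Ext^i_{\La}(\nGprj\CC,\nGprj\CC)=0=\Ext^i_{\Gprj\La}(\nGprj\CC,\nGprj\CC)$ for $i\notin n\Z$; equivalently $\Om^n(\nGprj\CC)\subseteq\nGprj\CC$ by Proposition \ref{Prop 3.6}(iv).

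For part (ii), the $n$-cluster tilting property of $\underline{\nGprj}\CC$ in the triangulated category $\underline{\Gprj}\La$ is precisely Proposition \ref{StableNGprj}, so only the $n\Z$ refinement is left. Using the isomorphisms $\underline{\Hom}_{\La}(X,\Sigma^i Y)\cong\Ext^i_{\La}(X,Y)$ for $i\geqslant 1$ recorded in the proof of Proposition \ref{StableNGprj}, the vanishing $\Ext^i_{\La}(\nGprj\CC,\nGprj\CC)=0$ for $i\notin n\Z$ translates directly into $\underline{\Hom}_{\La}(\underline{\nGprj}\CC,\Sigma^i\underline{\nGprj}\CC)=0$ for such $i$; for the remaining residues one uses that, by Proposition \ref{Prop 3.6}(v), $\Om^n=\Sigma^{-n}$ restricts to an autoequivalence of $\underline{\nGprj}\CC$, so that $\underline{\nGprj}\CC$ is stable under $\Sigma^{\pm n}$. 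Together these give the $n\Z$ condition for the triangulated subcategory $\underline{\nGprj}\CC$.

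I expect the main obstacle to be the functorial-finiteness step of part (i): upgrading the one-sided finiteness of Lemma \ref{ContNGor} and the stable functorial finiteness of Corollary \ref{FnctoriallyFiniNGproj} to genuine functorial finiteness in the exact category $\Gprj\La$. The lifting of approximations across $\Gprj\La\rt\underline{\Gprj}\La$ must be carried out with care, correcting by projective summands for the maps that die in the stable category, and the inclusion $\prj\La\subseteq\nGprj\CC$ is exactly what keeps these corrections inside the subcategory. Once the identification of $\Ext$-groups inside $\Gprj\La$ with those in $\mmod\La$ is in place, everything else is bookkeeping over the earlier propositions.
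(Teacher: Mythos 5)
Your proposal is correct, and on most of its pillars it coincides with the paper's own proof: both arguments reduce the orthogonality conditions to Proposition \ref{LeftRightOrtogonal}, the vanishing $\Ext^i_{\La}(\nGprj\CC,\nGprj\CC)=0$ for $i\notin n\Z$ to the inclusion $\nGprj\CC\subseteq\CC$, and part (ii) to Proposition \ref{StableNGprj} together with the $\Sigma^{\pm n}$-stability coming from Proposition \ref{Prop 3.6}(v). The genuine divergence is the functorial finiteness of $\nGprj\CC$ in $\Gprj\La$: the paper obtains it in one line by feeding the stable statement of Proposition \ref{StableNGprj} into the general result \cite[Theorem 11.3 (b)(ii)]{B} of Beligiannis, whereas you prove it directly, taking contravariant finiteness from Lemma \ref{ContNGor}(i) and lifting the stable left approximations of Corollary \ref{FnctoriallyFiniNGproj}(i) via the corrected map $(g,e)\colon G\rt G'\oplus I$, with $e\colon G\rt I$ an admissible monomorphism into a projective (which exists because $G$ is a cycle of a totally acyclic complex). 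Your lifting is valid; the one step worth spelling out is why a map $G\rt P$ with $P\in\prj\La$ factors through $e$: apply $\Hom_{\La}(-,P)$ to $0\rt G\st{e}\rt I\rt \Om^{-1}G\rt 0$ and use $\Ext^1_{\La}(\Om^{-1}G,P)=0$, which holds since $\Om^{-1}G\in\Gprj\La$; then $\prj\La\subseteq\nGprj\CC$ and closure under finite direct sums (Proposition \ref{Prop 3.6}(i),(ii)) keep $G'\oplus I$ inside the subcategory. Your preliminary identification $\Ext^i_{\Gprj\La}(X,Y)\cong\Ext^i_{\La}(X,Y)$, by dimension shifting with syzygies that remain in $\Gprj\La$, is also correct and makes explicit something the paper uses silently when it reads Proposition \ref{LeftRightOrtogonal} as an orthogonality statement for the inherited exact structure (it likewise covers Jasso's generating--cogenerating requirement, since $0\rt\Om G\rt P\rt G\rt 0$ and $0\rt G\rt I\rt\Om^{-1}G\rt 0$ are conflations with projective middle terms). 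The trade-off: the paper's route is shorter but rests on an external black box, while yours is self-contained, in the classical Auslander--Smal\o{} style of correcting stable approximations by projective summands, at the cost of a somewhat longer verification.
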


\begin{proof}
 $(i)$ By Proposition \ref{LeftRightOrtogonal}, $\nGprj\CC={}^{\bot_{n}}(\nGprj\CC)\cap\Gprj\La=(\nGprj\CC)^{\bot_{n}}\cap \Gprj\La$.  Since $\CC$ an $n\Z$-cluster tilting subcategory of $\mmod\La$, we have $\Ext^i_{\La}(\nGprj\CC, \nGprj\CC)=0$, for  all $i \notin n\Z$. So to complete the proof, it remains to prove that $\nGprj\CC$ is a functorially finite subcategory of $\Gprj\La$. To see this, we note that by Proposition \ref{StableNGprj}, $\underline{\nGprj}\CC$ is an $n$-cluster tilting subcategory of  $\underline{\Gprj}\La$, so  \cite[Theorem 11.3 (b)(ii)]{B} implies that $\nGprj\CC$ is a functorially finite subcategory of $\Gprj\La$.//
  $(ii)$ Follows from Proposition \ref{StableNGprj} and $(i)$.
\end{proof}

Since $\underline{\nGprj}\CC$ is an $n$-cluster tilting  subcategory of $\underline{\Gprj}\La$ which is stable under $n$-th power of the syzygy functor, then as an application of the above theorem, we obtain from \cite[Theorem 1]{GKO} that $\underline{\nGprj}\CC$ has the  structure of an $(n+2)$-angulated category. That is, we have the following corollary.

\begin{corollary}\label{Nangulatedcategory}
Let $\La$ be a Gorenstein algebra and $\CC$ be an $n\Z$-cluster tilting subcategory of $\mmod \La.$ Then $(\underline{\nGprj}\CC, \Sigma^n, \bigtriangleup) $
is an $(n+2)$-angulated category, where $\bigtriangleup$ is the class of all sequences \[ X^1\st{\underline{f}^1}\rt X^2\st{\underline{f}^2}\rt\cdots\st{\underline{f}^{n+1}}\rt X^{n+2}\st{\underline{f}^{n+2}}\rt  \Sigma^{n} X^1 \]
in $\underline{\nGprj}\CC$ such that there exists a diagram
\[ \xymatrix@!=.5pc{ & X^2 \ar[rr]^{\underline{f}^2} \ar[rd] && X^3 \ar[rd] && \cdots && X^{n+1} \ar[rd]^{\underline{f}^{n+1}} \\ X^1 \ar[ru]^{\underline{f}^1} \ar@{<-{|-}}[rr] && X^{2.5} \ar[ru] \ar@{<-{|-}} [rr] && X^{3.5} & \cdots & X^{n.5} \ar[ru] \ar@{<-{|-}}[rr] && X^{n+2}} \]
 where, all oriented  triangles are induced by short exact sequences in $\Gprj\La$, all non-oriented triangle commutes, and $\underline{f}^{n+2}$ is the composition along the lower edge of the diagram.
\end{corollary}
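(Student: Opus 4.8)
The plan is to obtain the corollary as a direct application of the Geiss--Keller--Oppermann theorem \cite[Theorem 1]{GKO}: if $\CM$ is an $n$-cluster tilting subcategory of a triangulated category $\CT$ that is moreover closed under the $n$-th power $\Sigma^n$ of the suspension, then $(\CM,\Sigma^n)$ carries an $(n+2)$-angulated structure whose distinguished $(n+2)$-angles are exactly those obtained by splicing together the consecutive triangles of an iterated-cone diagram of the shape drawn in the statement. So the task reduces to verifying the two hypotheses for $\CM=\underline{\nGprj}\CC$ sitting inside $\CT=\underline{\Gprj}\La$, and then matching the resulting class of angles with $\bigtriangleup$.

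First I would recall that $\CT=\underline{\Gprj}\La$ is already triangulated, with the structure induced by the Frobenius exact structure on $\Gprj\La$ discussed above: every triangle arises from a short exact sequence $0\rt G^1\rt G^2\rt G^3\rt 0$ with terms in $\Gprj\La$, and the suspension is $\Sigma=\Om^{-1}$, the quasi-inverse of the syzygy functor. The first hypothesis, that $\underline{\nGprj}\CC$ is $n$-cluster tilting in $\CT$, is supplied verbatim by Theorem \ref{NClusterTilting}$(ii)$. For the second hypothesis I would observe that closure under $\Sigma^n$ is precisely the extra feature encoded in the $n\Z$ condition: since $\Om^n$ restricts to an autoequivalence of $\underline{\nGprj}\CC$ by Proposition \ref{Prop 3.6}$(v)$, its inverse $\Sigma^n=\Om^{-n}$ also preserves $\underline{\nGprj}\CC$, so that $\Sigma^n(\underline{\nGprj}\CC)=\underline{\nGprj}\CC$.

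With both hypotheses in hand, \cite[Theorem 1]{GKO} applies and equips $(\underline{\nGprj}\CC,\Sigma^n)$ with an $(n+2)$-angulated structure, and it remains to identify the distinguished angles it produces with the class $\bigtriangleup$ of the statement. This is essentially a translation: the elementary triangles appearing in the splicing of \cite{GKO} are triangles of $\underline{\Gprj}\La$, which by the description above are exactly those induced by short exact sequences in $\Gprj\La$; the intermediate vertices $X^{k.5}$ serve as the successive cones, the non-oriented triangles record the commutativity of the construction, and the map $\underline{f}^{n+2}$ along the lower edge is the composite landing in $\Sigma^n X^1$. I expect this last bookkeeping to be the only genuinely delicate point---checking that the abstract iterated-cone diagram of \cite{GKO} coincides, angle for angle, with the explicit diagram of oriented and non-oriented triangles displayed in the statement; everything else is a matter of quoting the already-established facts that $\underline{\nGprj}\CC$ is $n\Z$-cluster tilting in $\underline{\Gprj}\La$ and that triangles in $\underline{\Gprj}\La$ are synonymous with short exact sequences in $\Gprj\La$.
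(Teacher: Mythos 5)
Your proposal is correct and takes essentially the same route as the paper: the paper also derives the corollary as a one-line application of \cite[Theorem 1]{GKO}, citing Theorem \ref{NClusterTilting}$(ii)$ for the $n$-cluster tilting hypothesis and the stability of $\underline{\nGprj}\CC$ under the $n$-th power of the suspension, with $\bigtriangleup$ being GKO's iterated-cone angles read through the identification of triangles in $\underline{\Gprj}\La$ with short exact sequences in $\Gprj\La$. Your deduction of closure under $\Sigma^n$ from Proposition \ref{Prop 3.6}$(v)$ merely makes explicit what the paper asserts without elaboration.
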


The above theorem, in particular, implies that the subcategory $\nGprj\CC$ is a Frobenius $n$-exact category.

\begin{theorem}
Let $\La$ be a Gorenstein algebra and $\CC$ be an $n\Z$-cluster tilting subcategory of $\mmod\La.$ Then
\begin{itemize}
\item[$(i)$] The pair $(\nGprj\CC, \CS)$ is a Frobenius $n$-exact category. Here $\CS$ denotes all exact sequences
\[0 \rt G^0 \rt G^1 \rt \cdots \rt G^n\rt G^{n+1}\rt 0\]
with terms in $\nGprj\CC$. Note that by \cite[Theorem 4.14]{Ja}, $\CS$ gives an $n$-exact structure on $\nGprj\CC$;
\item[$(ii)$] Let $(\underline{\Gprj}\La, \Sigma_{\underline{\Gprj}\La},\triangle)$ be the canonical triangulated structure of $\underline{\Gprj}\La$ and
\\ $(\underline{\nGprj}\CC, \mho^n, \zeta)$ be  the standard $(n+2)$-angulated structure of $\underline{\nGprj}\CC$, which have been already defined. Then, we have an equivalence of $(n+2)$-angulated categories between $(\underline{\nGprj}\CC, \mho^n, \zeta)$ and $(\underline{\nGprj}\CC, \Sigma^{n}_{\underline{\Gprj}\La},\CS)$. The latter $(n+2)$-angulated structure comes from this fact that $\underline{\nGprj}\CC$ is an $n$-cluster tilting subcategory of $\underline{\Gprj}\La.$
\end{itemize}	
\end{theorem}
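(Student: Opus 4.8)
The plan is to exploit the two ambient structures already established: the Frobenius exact structure on $\Gprj\La$, whose projective-injective objects are exactly $\prj\La$, and the induced triangulated structure on $\underline{\Gprj}\La$, whose suspension is the cosyzygy $\Om^{-1}$; together with the fact, from Theorem~\ref{NClusterTilting}, that $\nGprj\CC$ is an $n\Z$-cluster tilting subcategory of $\Gprj\La$, so that $\CS$ is an $n$-exact structure by \cite[Theorem 4.14]{Ja}.

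For $(i)$, it remains to identify the $\CS$-projectives and $\CS$-injectives and to produce enough of each. First I would show that $\prj\La$, which lies in $\nGprj\CC$ by Proposition~\ref{Prop 3.6}$(i)$, consists of $\CS$-projective-injective objects. Any admissible $n$-exact sequence $0\to G^0\to\cdots\to G^{n+1}\to 0$ in $\CS$ decomposes, using that $\Gprj\La$ is closed under kernels of epimorphisms, into conflations $0\to K^{i-1}\to G^i\to K^i\to 0$ in $\Gprj\La$ with each $K^i\in\Gprj\La$; applying $\Hom(-,P)$ to the conflation $0\to G^0\to G^1\to K^1\to 0$ and $\Hom(P,-)$ to $0\to K^{n-1}\to G^n\to G^{n+1}\to 0$, and using that $P\in\prj\La$ is projective-injective in $\Gprj\La$, shows that $P$ is both $\CS$-projective and $\CS$-injective. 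Conversely, if $G\in\nGprj\CC$ is $\CS$-projective, then lifting $\id_G$ along the admissible epimorphism $P^0\twoheadrightarrow G$ coming from the defining $n\Z$-totally acyclic complex exhibits $G$ as a retract of the projective $P^0$, so $G\in\prj\La$; the $\CS$-injective case is dual. Hence $\CP=\CI=\prj\La$. Finally, the two halves of the $n\Z$-totally acyclic complex through $G$ supply the admissible $n$-exact sequences $\Om^n(G)\rightarrowtail P^{n-1}\to\cdots\to P^0\twoheadrightarrow G$ and $G\rightarrowtail P^0\to\cdots\to P^{n-1}\twoheadrightarrow\Om^{-n}(G)$ with projective middle terms, giving enough $\CS$-projectives and enough $\CS$-injectives. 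Thus $(\nGprj\CC,\CS)$ is Frobenius.

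For $(ii)$, both $(n+2)$-angulated structures have the same underlying category $\underline{\nGprj}\CC$, so I would realize the identity functor as an equivalence by matching suspensions and angle classes. The suspensions agree: since the $\CS$-injectives are $\prj\La$ by part $(i)$, the functor $\mho^n(A)$ is computed by a coresolution $A\rightarrowtail P^0\to\cdots\to P^{n-1}\twoheadrightarrow\mho^n(A)$ with $P^i\in\prj\La$, so $\mho^n(A)$ is the $n$-fold cosyzygy $\Om^{-n}(A)$; as the suspension of $\underline{\Gprj}\La$ is $\Om^{-1}$, this yields a natural isomorphism $\mho^n\cong\Sigma^n_{\underline{\Gprj}\La}$. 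For the angle classes, a standard $(n+2)$-angle in the class $\zeta$ arises from an $n$-exact sequence $0\to C^0\to\cdots\to C^{n+1}\to 0$ in $\CS$, which (as in part $(i)$) decomposes into conflations in $\Gprj\La$; each conflation induces a triangle in $\underline{\Gprj}\La$, and splicing these triangles is precisely the ladder defining a $(n+2)$-angle in the sense of Corollary~\ref{Nangulatedcategory}, and conversely.

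The crux, and the step I expect to be the main obstacle, is verifying that the connecting morphism $\underline{f}^{\,n+1}\colon C^{n+1}\to\mho^n(C^0)$ produced by Jasso's construction, via a comparison of the $n$-exact sequence with the $\CS$-injective coresolution of $C^0$, coincides under the identification $\mho^n(C^0)=\Om^{-n}(C^0)$ with the iterated connecting homomorphism obtained by splicing the $n$ triangles in $\underline{\Gprj}\La$. I would settle this by a diagram chase in the stable category: the comparison diagram lifting the $n$-exact sequence against the injective coresolution realizes, degree by degree, the connecting map of the corresponding conflation, and the composite of these is exactly the triangulated iterated connecting morphism. With the suspensions and angle classes thus identified, the identity functor together with the natural isomorphism $\mho^n\cong\Sigma^n_{\underline{\Gprj}\La}$ is an equivalence of $(n+2)$-angulated categories, completing the proof.
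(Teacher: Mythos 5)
Your proposal is correct, but it takes a genuinely different route from the paper: the paper disposes of both parts in two lines by citing Jasso's Theorem 5.16, having already established in Theorem \ref{NClusterTilting} that $\nGprj\CC$ is an $n\Z$-cluster tilting subcategory of the Frobenius exact category $\Gprj\La$ with $\mho^n(\nGprj\CC) \subseteq \nGprj\CC$ --- that theorem states precisely that such a subcategory inherits a Frobenius $n$-exact structure and (in its part $(iii)$) that the standard $(n+2)$-angulation of the stable category coincides with the one obtained from \cite{GKO} via the $n$-cluster tilting embedding into $\underline{\Gprj}\La$. What you do instead is unpack that black box: you identify $\CP=\CI=\prj\La$ directly (using that $\Gprj\La$ is closed under kernels of epimorphisms to decompose sequences of $\CS$ into conflations, that $\Ext^1_{\La}(\Gprj\La,\prj\La)=0$, and the retract argument along the admissible epimorphism $P^0 \twoheadrightarrow G$ and admissible monomorphism $G \rightarrowtail P^0$ supplied by the two halves of the $n\Z$-totally acyclic complex), you get enough $\CS$-projectives and $\CS$-injectives from those same halves, and you deduce $\mho^n \cong \Om^{-n} \cong \Sigma^n_{\underline{\Gprj}\La}$ because the $\CS$-injective coresolutions have projective terms. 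All of this is sound and buys a self-contained proof with explicit identifications that the paper leaves implicit. The one place where your write-up stops short of a proof is exactly the step you flag: the compatibility of Jasso's connecting morphism $\underline{f}^{\,n+1}\colon C^{n+1}\rt \mho^n(C^0)$ with the iterated connecting morphism obtained by splicing the $n$ induced triangles in $\underline{\Gprj}\La$. That comparison is the actual content of Jasso's Theorem 5.16$(iii)$, and your "diagram chase in the stable category" is the standard way to carry it out (compare the conflation-by-conflation lifts against the injective coresolution); to make the argument complete you must either execute that chase in full --- including checking that every sequence in $\zeta$ is an angle in the \cite{GKO} class \emph{and conversely}, since an $(n+2)$-angulated equivalence needs both containments --- or simply cite Jasso as the paper does. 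As written, your proof of $(i)$ is complete and more informative than the paper's, while your proof of $(ii)$ is a correct plan whose decisive verification is deferred.
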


\begin{proof}
$(i)$ We have observed by Theorem \ref{NClusterTilting} that  the subcategory $\nGprj\CC$ of  Frobenius exact category $\Gprj\La$ is $n$-cluster tilting subcategory with $\mho^n(\nGprj\CC) \subseteq \nGprj\CC.$  Therefore, by \cite[Theorem 5.16]{Ja} we get the result.

$(ii)$ Follows from part $(iii)$ of \cite[Theorem 5.16]{Ja}.
\end{proof}

\section{$n$-resolving subcategories}
Let $\CA$ be an abelian category with enough projective objects. One of the important properties of the subcategory $\Gprj\CA$ is its resolving property, i.e. it
contains all projective objects and moreover is closed under extensions and kernels of epimorphisms. Our aim in this section is to show that $\nGprj\CC$ has similar property, but in the higher context, where $\CC$ is an $n\Z$-cluster tilting subcategory of $\mmod\La$.

Let us begin by the definition of an $n$-resolving subcategory.

\begin{definition}\label{Resolving}
A full subcategory $\CM$ of an $n$-cluster tilting subcategory $\CC$ of $\mmod \La$ is called $n$-resolving if it satisfies the following conditions.
\begin{itemize}
\item [$(i)$] $\prj\La \subseteq \CM$.
\item [$(ii)$]For every epimorphism  $M \rt M' \rt 0$ in $\CM$ there exists  an $n$-exact sequence
\[0 \longrightarrow  M^1\rt  \cdots \rt  M^n \rt M\rt M' \rt 0 \]
 with all terms belong to $\CM.$
\item [$(iii)$] Each exact sequence
\[0 \rt M' \rt  A^1\rt  \cdots  \rt A^n \rt M \rt 0 \]  in $\mmod\La$,  with $M$ and $M'$ in $\CM$, is Yoneda equivalent to an $n$-exact sequence
\[0 \rt M' \rt  M^1\rt  \cdots  \rt M^n \rt M \rt 0 \]
with $M_i \in \CM$ for each $i\in\lbrace 1,...,n\rbrace.$
\end{itemize}
\end{definition}

Note that since $\CC$ contains $\prj\La$, it follows that  $\prj\CC=\prj\La$; see Subsection 3.2 of \cite{Ja} for definition of $\prj\CC$ of projective objects in an $n$-abelian category. So statement $(i)$ can be rewritten as $\prj\CC\subseteq \CM$. Moreover, in view of  \cite [A.1. Proposition ]{I}, we may assume that all middle terms of the first sequence of the statement $(iii)$, i.e. $A^i$, $i\in\lbrace 1,...,n\rbrace$, are in $\CC$.

Parts $(ii)$ and $(iii)$ of the  above definition are motivated by  \cite[Definition 2.11]{HJV} to give a `higher dimensional' version of the concepts `closed under epimorphisms' and `closed under extensions', respectively. Note that by \cite[Theorem 3.16]{Ja}, $n$-cluster tilting subcategory $\CC$ is an $n$-abelian category.

Following lemma  and propositions  prepare the ground for proving the main theorem of this section, i.e. Theorem \ref{NresolvingGP}.
\begin{lemma}\label{Condition(ii)}
Let $\La$ be a Gorenstein algebra and  $\CC$ be an $n\Z$-cluster tilting subcategory of $\mmod\La$. Then for every epimorphism $A\rt B$ in $\nGprj\CC$, there exists an exact sequence
\[0 \rt  X^1\rt  \cdots \rt  X^n \rt A \rt B \rt 0 \]
 with all terms belong to $\nGprj\CC$.
\end{lemma}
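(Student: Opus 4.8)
The plan is to run the whole argument inside the $n$-abelian structure carried by $\nGprj\CC$. Since $\La$ is Gorenstein, Theorem \ref{NClusterTilting}$(i)$ tells us that $\nGprj\CC$ is an $n\Z$-cluster tilting subcategory of the exact category $\Gprj\La$; being $n$-cluster tilting in an exact category, it is an $n$-abelian category (cf. \cite[Theorem 3.16]{Ja} and its exact-category analogue), and its $n$-exact sequences are exactly the complexes with all terms in $\nGprj\CC$ that are exact as sequences of $\La$-modules. This reformulates the lemma as the statement that the given epimorphism $A\rt B$ is an admissible epimorphism of this $n$-abelian category, so that an $n$-kernel of it, prolonged by $A\rt B$, is $n$-exact.

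First I would observe that the surjection $g\colon A\rt B$ is an epimorphism in the categorical sense in $\nGprj\CC$: a surjective $\La$-homomorphism is right cancellable, so $h_1g=h_2g$ forces $h_1=h_2$ for every pair of parallel maps out of $B$. To keep the picture inside the exact category where $n$-exactness is controlled, I would also record that $K:=\Ker g$ lies in $\Gprj\La$; indeed $A,B\in\nGprj\CC\subseteq\Gprj\La$ by Proposition \ref{Prop 3.6}$(i)$, and $\Gprj\La$ is resolving in $\mmod\La$, hence closed under kernels of epimorphisms, so that $0\rt K\rt A\st{g}\rt B\rt0$ is an admissible short exact sequence of $\Gprj\La$.

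Next I would feed $g$ into the $n$-abelian axioms of \cite{Ja}. By the existence of $n$-kernels, $g$ has an $n$-kernel
\[ X^1\rt X^2\rt\cdots\rt X^n\rt A \]
with every $X^i\in\nGprj\CC$. By the dual of Jasso's monomorphism axiom, applied to the epimorphism $g$ together with this $n$-kernel, the resulting sequence
\[ 0\rt X^1\rt X^2\rt\cdots\rt X^n\rt A\st{g}\rt B\rt0 \]
is $n$-exact in $\nGprj\CC$. By the identification recalled in the first paragraph, this is precisely an exact sequence of $\La$-modules with all terms in $\nGprj\CC$, which is the sequence we want.

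The step I expect to be the genuine obstacle is the compatibility invoked twice above: that an $n$-exact sequence in the $n$-cluster tilting subcategory $\nGprj\CC$ of $\Gprj\La$ is honestly exact as a sequence of $\La$-modules, and that the abstract $n$-kernel supplied by the axioms can be taken with all terms in $\nGprj\CC$. Both are instances of translating between the intrinsic $n$-abelian structure of $\nGprj\CC$ and the ambient exact structure of $\Gprj\La$ (and then of $\mmod\La$). This is exactly where the Gorenstein hypothesis is indispensable, since it is what makes $\nGprj\CC$ an $n$-cluster tilting subcategory of $\Gprj\La$ in the first place, via Proposition \ref{TheIwangaGoreNZcluster} and Theorem \ref{NClusterTilting}; the resolving property of $\Gprj\La$ then guarantees that the syzygies occurring in the constructed sequence remain in $\Gprj\La$, so that module-exactness and $n$-exactness coincide.
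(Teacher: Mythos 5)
Your reduction to the $n$-abelian axioms is where the argument breaks. You assert that, being $n$-cluster tilting in an exact category, $\nGprj\CC$ is $n$-abelian ``by the exact-category analogue of \cite[Theorem 3.16]{Ja}''; but the correct analogue is \cite[Theorem 4.14]{Ja}, which only endows $\nGprj\CC$ with the structure of an \emph{$n$-exact} category (indeed a Frobenius $n$-exact one, as the paper shows). The axioms of an $n$-exact category govern a distinguished class $\CS$ of admissible $n$-exact sequences; they assert nothing about arbitrary morphisms having $n$-kernels with terms in the category, and there is no analogue of Jasso's axiom (A2${}^{\rm op}$) saying that every epimorphism occurs as the last morphism of an admissible sequence. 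Already for $n=1$ your pattern fails: there $\nGprj\CC$ is all of $\Gprj\La$, which is exact but not abelian in general. For $n\geq 2$, what your two invoked ``axioms'' would deliver is precisely that $K=\Ker(A\rt B)$ --- which lies in $\Gprj\La$ but typically \emph{not} in $\CC$ --- admits an exact resolution $0\rt X^1\rt\cdots\rt X^n\rt K\rt 0$ with all $X^i\in\nGprj\CC$; that is exactly the content of the lemma, so quoting it as an axiom of the formalism is circular. (Your preliminary observations are fine: $g$ is a categorical epimorphism, $K\in\Gprj\La$ by the resolving property, and the admissible $n$-exact sequences of $\nGprj\CC$ are the module-exact sequences with terms in $\nGprj\CC$; the gap is solely the existence step.)

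The paper closes this gap by a different mechanism: by Corollary \ref{Nangulatedcategory} (via \cite[Theorem 1]{GKO} applied to Theorem \ref{NClusterTilting}), $\underline{\nGprj}\CC$ is an $(n+2)$-angulated category, so $\underline{f}$ completes to an $(n+2)$-angle $X^1\rt\cdots\rt X^n\rt A\rt B\rt\Sigma^n X^1$ whose oriented triangles are, by construction, realized by genuine short exact sequences in $\Gprj\La$; gluing these short exact sequences produces the desired module-exact sequence with all terms in $\nGprj\CC$. If you prefer to stay inside the exact category rather than its stable category, your approach can be repaired, but only by actually constructing the length-$(n-1)$ resolution of $K$: take iterated right $\nGprj\CC$-approximations of $K$ and its successive kernels (available by Lemma \ref{ContNGor} and Proposition \ref{GProjApproxiamation}), and use the orthogonality relations of Proposition \ref{LeftRightOrtogonal} together with $\Ext$-vanishing to show each approximation is epic, each kernel stays in $\Gprj\La$, and the process terminates in the object of ${}^{\bot_n}(\nGprj\CC)\cap\Gprj\La=\nGprj\CC$ after $n-1$ steps --- an Iyama-style argument that must be carried out, not imported as an axiom.
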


\begin{proof}
Let  $A \st{f} \rt B$ be an epimorphism in $\nGprj\CC$ . By Corollary \ref{Nangulatedcategory}, the morphism $\underline{f}$ can be completed  to an $(n+2)$-angle
\[	X^1 \st{\underline{f}^1}\rt X^2 \st{\underline{f}^{2}}\rt \cdots X^n \st{\underline{f}^{n}}\rt A \st{\underline{f}}\rt B  \st{\underline{h}}\rt \Sigma^{n} X^1 \]
in $\underline{\nGprj}\CC$ and so there exists a diagram
\[ \xymatrix@!=.5pc{ & X^2 \ar[rr]^{\underline{f}^2} \ar[rd]^{\underline{g}^2} && X^3 \ar[rd] && \cdots && A \ar[rd]^{\underline{f}} \\ X^1 \ar[ru]^{\underline{f}^1} \ar@{<-{|-}}[rr] && X^{2.5} \ar[ru]^{\underline{l}^2} \ar@{<-{|-}} [rr] && X^{3.5} & \cdots & X^{n.5} \ar[ru]^{\underline{l}^n} \ar@{<-{|-}}[rr] && B} \]
such that all the oriented  triangles  in the above diagram are induced by  short exact sequences in $\Gprj\La$. Now  by gluing the short exact sequences, we obtain the following exact sequence
$$0 \rt X^1 \st{f^1}\rt X^2\st{l^2g^2}\rt X^3\rt \cdots \rt X^n \st{l^ng^n} \rt A \st{f}\rt B \rt 0,$$
as required.
\end{proof}

\begin{remark}\label{ProjPresenCoker}
Let $\CX$ be an additive category and $\mmod \CX$  be the category of all finitely presented functors over $\CX.$ It is known that  $\mmod \CX$ is an abelian category closed under cokernels. Study of $\mmod\CX$ and its properties is known under the name of functor category. Towards the end of the paper, we shall use some known facts of the functor categories without further references. Just let us implement the following two facts.
\begin{itemize}
\item [(i)] Let $f: F_1 \rt F_2$ be a map in $\mmod\CX$  and  $(-, X_1) \rt (-, X_0) \rt F_1 \rt 0$ and $(-, Y_1)\rt (-, Y_0) \rt F_2 \rt 0$ be projective presentations of $F_1$ and $F_2$, respectively. In view of the proof of \cite[Proposition 2.1]{A}, we can construct the following projective  presentation of $\Coker  f$,  $$(-, X_0\oplus Y_1) \rt (-, Y_0) \rt\Coker f \rt 0.$$

\item [(ii)] Let $\CX$ be a subcategory of $\mmod \La$ containing $\prj\La$. One can identify the objects of the category
$\mmod \underline{\CX}$ of finitely presented functors over the stale category $\underline{\CX}$ with those functors of $\mmod \CX$ that vanish over  $\prj\La$.
\end{itemize}
\end{remark}

The proof of the following lemma is inspired by a part of the proof of  \cite[Proposition 2.1]{A}.

\begin{proposition}\label{PrjPresentation}
Let $\La$ be a Gorenstein algebra and $\CC$ be an $n\Z$-cluster tilting subcategory of $\mmod\La$.  Let
$$0\rt (-,\underline{A})\rt (-,\underline{M})\rt(-,\underline{M}'')$$
be an exact sequence in $\mmod\CC$, where for every object $B$, $(-,\underline{B})$  denotes  $\underline{\Hom}_{\La}(-,B)\vert _{\CC}$ and $M,M''\in\nGprj\CC$. Then $(-,\underline{A})$ has a projective presentation $(-,X_1)\rt (-,X_0)\rt (- ,\underline{A})\rt 0$ such that $X_0,X_1\in\nGprj\CC$.
\end{proposition}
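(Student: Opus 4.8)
The plan is the following. Since $M,M''\in\nGprj\CC\subseteq\CC$, the functors $(-,\underline{M})$ and $(-,\underline{M}'')$ are representable, hence projective, in $\mmod\underline{\CC}$, and by Yoneda the given map is $(-,\underline{g})$ for a morphism $g\colon M\to M''$ of $\nGprj\CC$; thus $(-,\underline{A})=\Ker(-,\underline{g})$. The projective objects of $\mmod\underline{\CC}$ are the representables $(-,\underline{B})$ with $B\in\CC$, so the content of the statement is to \emph{cover} $(-,\underline{A})$ by representables whose representing objects can be chosen inside $\nGprj\CC$. Following the pattern of \cite[Proposition 2.1]{A} recorded in Remark \ref{ProjPresenCoker}, I would build the presentation in two steps: first produce an epimorphism $(-,X_0)\twoheadrightarrow(-,\underline{A})$ with $X_0\in\nGprj\CC$, and then present the kernel of this epimorphism, which will again be the kernel of a map between $\nGprj\CC$-representables, by the same device.

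The key reduction, and the step I expect to carry the real weight, is that every functor $(-,\underline{B})$ with $B\in\Gprj\La$ is determined by its restriction to $\nGprj\CC$. Indeed, for $C\in\CC$ choose the $n\Z$-Gorenstein projective precover $0\rt L\rt G_C\rt C\rt 0$ of Proposition \ref{GProjApproxiamation}, with $G_C\in\nGprj\CC$ and $\pd_\La L<\infty$. Using the Buchweitz--Happel--Rickard equivalence $\underline{\Gprj}\La\simeq\D^b(\La)/\D^{\perf}(\La)$ recalled in the introduction, $L$ becomes zero and $G_C\rt C$ becomes an isomorphism in $\underline{\Gprj}\La$; concretely one checks $\Ext^{\geq 1}_\La(L,B)=0$ and $\underline{\Hom}_\La(L,B)=0$ (the latter by lifting a map $L\to B$ along a projective cosyzygy of $B\in\Gprj\La$), so that the precover induces an isomorphism $\underline{\Hom}_\La(C,B)\xrightarrow{\,\sim\,}\underline{\Hom}_\La(G_C,B)$. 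Applying this to $B=M,M''$ shows that $(-,\underline{M})$, $(-,\underline{M}'')$ and therefore their kernel $(-,\underline{A})$ are all determined, object by object, by their values on $\nGprj\CC$; in particular a map between such functors is an epimorphism as soon as it is surjective on the objects of $\nGprj\CC$, since its cokernel is then zero on $\nGprj\CC$ and hence (via the same isomorphisms) zero everywhere.

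With this in hand, I would realize $(-,\underline{A})$ as the image of a representable. Completing $\underline{g}\colon M\to M''$ to a triangle $Z\xrightarrow{z}M\xrightarrow{\underline{g}}M''\rt\Sigma Z$ in the triangulated category $\underline{\Gprj}\La$ (so $Z\in\Gprj\La$) and applying the cohomological functor $\underline{\Hom}_\La(C,-)$ for $C\in\CC$ gives exactness of $\underline{\Hom}_\La(C,Z)\rt\underline{\Hom}_\La(C,M)\rt\underline{\Hom}_\La(C,M'')$, whence $(-,\underline{A})=\Ker(-,\underline{g})=\im\big((-,\underline{z})\colon(-,\underline{Z})\rt(-,\underline{M})\big)$. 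Since $\underline{\nGprj}\CC$ is functorially finite in $\underline{\Gprj}\La$ (Corollary \ref{FnctoriallyFiniNGproj}), I choose a right $\underline{\nGprj}\CC$-approximation $p\colon X_0\to Z$ with $X_0\in\nGprj\CC$. Then $zp\colon X_0\to M$ satisfies $\underline{g}\,(zp)=0$, so $(-,\underline{zp})$ factors through $(-,\underline{A})$; and on the objects of $\nGprj\CC$ the approximation property makes $(-,X_0)\rt(-,\underline{A})$ surjective, hence it is an epimorphism by the reduction of the previous paragraph.

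Finally, because $(-,\underline{A})\hookrightarrow(-,\underline{M})$ is a monomorphism, the kernel of $(-,X_0)\twoheadrightarrow(-,\underline{A})$ coincides with $\Ker\big((-,\underline{zp})\colon(-,X_0)\rt(-,\underline{M})\big)$, which has exactly the same shape as $(-,\underline{A})$: the kernel of a map between representables with representing objects in $\nGprj\CC$. Repeating the homotopy-fiber-plus-approximation construction of the previous paragraph for the morphism $zp\colon X_0\to M$ yields $X_1\in\nGprj\CC$ together with a map $(-,X_1)\rt(-,X_0)$ whose image is this kernel, giving the desired projective presentation $(-,X_1)\rt(-,X_0)\rt(-,\underline{A})\rt 0$ with $X_0,X_1\in\nGprj\CC$. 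The only genuinely delicate point is the reduction in the second paragraph: one must be sure that surjectivity tested on $\nGprj\CC$ propagates to all of $\CC$, and this is precisely what the vanishing of $\underline{\Hom}_\La(-,B)$ on finite-projective-dimension modules (for $B\in\Gprj\La$) guarantees.
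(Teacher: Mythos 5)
Your reduction in the second paragraph is the load-bearing step, and it is false as stated: both vanishing claims you rest it on fail for a general Gorenstein algebra. The claim $\Ext^{\geq 1}_{\La}(L,B)=0$ already fails for $B$ projective, since $\Ext^{d}_{\La}(L,\La)\neq 0$ whenever $\pd_{\La}L=d\geq 1$. More seriously, the claim $\underline{\Hom}_{\La}(L,B)=0$ for $\pd_\La L<\infty$ and $B\in\Gprj\La$ is false, and your parenthetical ``lifting along a projective cosyzygy'' cannot repair it because the relevant obstruction group $\Ext^1_\La(L,\Om B)$ does not vanish. Concretely, let $\La=kQ/(\ell^2)$ where $Q$ has a loop $\ell$ at vertex $1$ and an arrow $\alpha\colon 1\rt 2$ (a gentle, hence Iwanaga--Gorenstein, algebra). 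The module $N=\La\ell$ is Gorenstein projective: splicing $0\rt \La\ell\rt \La e_1\xrightarrow{\cdot\ell}\La\ell\rt 0$ gives the complex $\cdots\rt\La e_1\xrightarrow{\cdot\ell}\La e_1\xrightarrow{\cdot\ell}\La e_1\rt\cdots$, which one checks directly is totally acyclic. Let $W=\La e_1/S$, where $S\simeq S_2$ is the simple submodule spanned by $\alpha\ell$; then $0\rt S_2\rt \La e_1\rt W\rt 0$ with $S_2=P_2$ projective shows $\pd_\La W=1$. Right multiplication by $\ell$ kills $\alpha\ell$, so it descends to an epimorphism $f\colon W\rt N$, and a short computation shows every homomorphism $W\rt P_1$ has image in $\La\ell\subset P_1$ while $\Hom_\La(W,P_2)=0$; consequently every map $W\rt N$ factoring through a projective kills $\bar{e}_1$, whereas $f(\bar{e}_1)=\ell\neq 0$. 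Hence $\underline{\Hom}_\La(W,N)\neq 0$ even though $W\simeq 0$ in $\D_{\rm sg}(\La)$. This also pinpoints the error in your appeal to the Buchweitz--Happel--Rickard equivalence: $C\notin\Gprj\La$ in general, an isomorphism $G_C\rt C$ in $\D_{\rm sg}(\La)$ only identifies singularity-category Homs, and the comparison map $\underline{\Hom}_\La(-,B)\rt \Hom_{\D_{\rm sg}(\La)}(-,B)$ is not injective on non-Gorenstein-projective arguments (the example above is exactly a failure of injectivity). So ``surjectivity tested on $\nGprj\CC$ propagates to all of $\CC$'' is unproven, and with it both your epimorphism $(-,X_0)\rt(-,\underline{A})$ and the iteration producing $X_1$ collapse. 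Your third paragraph (realizing $(-,\underline{A})$ as the image of $(-,\underline{Z})$ via a triangle and Proposition \ref{longexactStable}, then approximating $Z$ using Corollary \ref{FnctoriallyFiniNGproj}) is sound, but it only yields surjectivity on the objects of $\nGprj\CC$, which is precisely where the gap sits.

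This is also where you diverge from the paper, which never compares $C$ with its Gorenstein projective approximation stably. Instead, the paper lifts the map $(-,\underline{M})\rt(-,\underline{M}'')$ to a map of projective presentations in $\mmod\CC$ coming from the $n$-syzygy sequences of $M$ and $M''$, passes to the mapping cone, and manufactures all needed presentations from two ingredients: Lemma \ref{Condition(ii)} (every epimorphism in $\nGprj\CC$ extends to an exact sequence with all $n+2$ terms in $\nGprj\CC$, obtained by gluing an $(n+2)$-angle in $\underline{\nGprj}\CC$) and Auslander's cokernel-presentation device (Remark \ref{ProjPresenCoker}). A secondary point you leave unaddressed: the statement asks for a presentation by the \emph{ordinary} representables $(-,X_i)=\Hom_\La(-,X_i)\vert_\CC$, the projectives of $\mmod\CC$ -- this is what is used later in Theorem \ref{NresolvingGP}, where the presentation is evaluated at $\La$ -- whereas your construction, even if the reduction were repaired, produces a presentation by stable representables in $\mmod\underline{\CC}$. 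That mismatch is fixable (replace $(-,X_1)\rt(-,X_0)$ by $(-,X_1\oplus P)\rt(-,X_0)$ with $P\rt X_0$ a projective cover, using $\Ext^1_\La(\CC,\prj\La)=0$ for $n\geq 2$), but the main gap above is not.
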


\begin{proof}
Let  $M$ and $M''$ be objects of $\nGprj\CC$. Consider  the following exact sequences
$$0\rt\Omega^{n}(M)\rt P^{n-1}\rt\cdots\rt P^{0}\rt M\rt 0,  \ \ \ \ \ \ \ \ \ (\ast) $$
$$0\rt \Omega^{n}(M'')\rt Q^{n-1}\rt\cdots\rt Q^0\rt M''\rt 0. \ \ \ \ \ \ \ \ (\ast\ast)$$
  Clearly, $(-,P^0)\rt (-,M)\rt (-,\underline{M})\rt 0$ and $(-,Q^0)\rt(-,M'')\rt (-,\underline{M}'')\rt 0$ are projective presentations of $(-,\underline{M})$ and $(-,\underline{M}'')$ in $\mmod\CC$, respectively.
We know that map $(-,\underline{M})\rt (-,\underline{M}'')$ can be lifted to a map of the complexes
$\sigma: {\bf P} \rt \mathbf{Q}$, where ${\bf P}$ is the complex $$\cdots\rt 0 \rt (-, P^0) \rt (-, M) \rt 0\rt  \cdots,$$ and $\bf{Q}$  is the complex $$\cdots\rt 0\rt (-, Q^0) \rt (-, M'')\rt 0\rt  \cdots.$$ Let $\bf{M}(\sigma)$ be the mapping cone of $\sigma$, i.e.
 	$$\cdots\rt 0 \rt (-, P^0) \rt (-, M \oplus Q^0) \rt (-, M'')\rt 0 \rt \cdots $$

Hence we get an exact sequence
\[	\rm{H}^{-1}(\bf{P})\rt \rm{H}^{-1}(\bf{Q})\rt \rm{H}^{-1}(\bf{M}(\sigma))\rt\rm{H}^0(\bf{P})\rt \rm{H}^0(\bf{Q})\rt \rm{H}^0(\bf{M}(\sigma))\rt 0 	\]
with $\rm{H}^0({\bf P})=(-, \underline{M})$, $\rm{H}^0(\mathbf{Q})=(-, \underline{M}'')$ and the map $\rm{H}^0({\bf P}) \rt \rm{H}^0(\mathbf{Q})$ is the map $(-, \underline{M}) \rt (-, \underline{M}'')$.
Also by $(\ast)$ and $(\ast\ast)$, we get the following projective presentations
$$(-, P^2)\rt (-, P^1)\rt \rm{H}^{-1}(\mathbf{P})\rt 0,$$
$$(-, Q^2)\rt (-, Q^1)\rt \rm{H}^{-1}(\mathbf{Q})\rt 0,$$
for $\rm{H}^{-1}(\bf{P})$ and
$\rm{H}^{-1}(\bf{Q})$, respectively. In view of Remark \ref{ProjPresenCoker}, we have  the projective presentation
$(-, P^1\oplus Q^2)\rt (-, Q^1)\rt G \rt 0$ for $G$, where $G:=\Coker(\rm{H}^{-1}({\bf{P}}) \rt \rm{H}^{-1}({\bf{Q}}))$.  Now we show that there is a projective presentatin   $(-,C_1)\rt(-,C_0)\rt \rm{H^{-1}}({\bf{M}}(\sigma))\rt 0$ for $\rm{H^{-1}}(\bf{M}(\sigma))$, with $C_0,C_1\in\nGprj\CC$.  To do this, set  $Z:=\Ker((-, M\oplus Q^0)\rt (-, M''))$ in the mapping cone  $\bf{M}(\sigma)$. Since $M\oplus Q^0\rt M''$ is an epimorphism, by Lemma \ref{Condition(ii)}, there is an exact sequence
$$ 0  \rt A^n \rt \cdots A^1\rt M\oplus Q^0 \rt M'' \rt 0$$
such that $A^i \in \nGprj\CC.$ This sequence gives the following exact sequence
$$ 0  \rt(-, A^n) \rt \cdots (-, A^1)\rt (-, M\oplus Q^0) \rt  (-, M'') $$ in $\mmod \CC$. Consequently,  $Z$ has the projective presentation $(-, A^2)\rt (-, A^1) \rt Z\rt 0$ with $A^1, A^2 \in \nGprj\CC$. Therefore, by  Remark \ref{ProjPresenCoker}, $\rm{H}^{-1}({\bf{M}}(\sigma))=\Coker((-, P^0)\rt Z)$  has  the projective presentation $(-, P^0\oplus A^2) \rt (-, A^1)\rt \rm{H}^{-1}({\bf{M}}(\sigma))\rt 0$ in $\mmod \CC.$ \\
Now by considering the  short exact sequence $0\rt G\rt \rm{H}^{-1}({\bf{M}}(\sigma))\rt (-,\underline{A})\rt 0$, obtained from the  following diagram
\[ 	\xymatrix@C=0.4cm{
 		\rm{H}^{-1}({\bf{P}}) \ar[r] & \rm{H}^{-1}({\bf{Q}}) \ar[rr]^{} \ar@{->>}[dr] & & \rm{H}^{-1}({\bf{M}}(\sigma))\ar[r]^{}\ar@{->>}[d] &\rm{ H}^0({\bf{P}}) \ar[r] &\rm{ H}^0({\bf{Q}}) \\
 		& & G  \ar@{>->}[ur] & (-,\underline{A}) \ \ \ar@{>->}[ur] }
 	\]
 and another use of Remark \ref{ProjPresenCoker} again, we get the projective presentation $$(-, Q^0\oplus P^0 \oplus A^2)\rt (-, A^1)\rt (-, \underline{A})\rt 0$$
 for $(-,\underline{A})$ with $A^1, Q^0\oplus P^0\oplus A^2\in\nGprj\CC$. This completes the proof.
\end{proof}

Following fact will be used in the proof of the next theorem.
\begin{proposition}(see \cite{AR})\label{longexactStable}
Let $0 \rt X_2 \rt X_1 \rt X_0 \rt 0$ be an exact sequence in $\mmod \La.$ Then we have the  following long exact sequence
\[\begin{tikzcd}[column sep=tiny, row sep=tiny]
\cdots\rar&\underline{\rm{Hom}}_{\La}(-, \Om X_1)\rar&\underline{\Hom}_{\La}(-,\Om X_0)\rar&\underline{\Hom}_{\La}(-,X_2)\rar &{}\\     {}\rar&\underline{\Hom}_\La(-,X_1)\rar&\underline{\Hom}_\La(-,X_0)\rar&\Ext_{\La}^1(-,X_2)\rar&{}\\
{}\rar&\Ext_{\La}^1(,X_1)\rar&\Ext_{\La}^{1}(-,X_0)\rar&\Ext_{\La}^{2}(-,X_2)\rar&\cdots.
\end{tikzcd}\]
of functors on $\mmod \La$.

\end{proposition}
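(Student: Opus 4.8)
The plan is to deduce the long exact sequence from the ordinary $\Hom$--$\Ext$ long exact sequence applied to the syzygies of the given short exact sequence, using the standard identification of stable $\Hom$ with $\Ext^1$ of a syzygy. The key preliminary fact I would record is the natural isomorphism $\underline{\Hom}_\La(-,X)\cong\Ext^1_\La(-,\Om X)$ in the displayed (contravariant) variable: choosing an epimorphism $P\twoheadrightarrow X$ with $P$ projective and the resulting $0\to\Om X\to P\to X\to 0$, applying $\Hom_\La(Y,-)$, and using $\Ext^1_\La(Y,P)=0$, the connecting map $\Hom_\La(Y,X)\to\Ext^1_\La(Y,\Om X)$ is surjective with kernel exactly the morphisms $Y\to X$ factoring through a projective; this gives $\underline{\Hom}_\La(Y,X)\cong\Ext^1_\La(Y,\Om X)$. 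Combined with the usual dimension-shift isomorphisms $\Ext^{j}_\La(-,\Om Z)\cong\Ext^{j-1}_\La(-,Z)$ for $j\ge 2$, one obtains, for $k\ge 1$, that $\Ext^{j}_\La(-,\Om^k X)\cong\Ext^{j-k}_\La(-,X)$ when $j>k$ and $\Ext^{j}_\La(-,\Om^k X)\cong\underline{\Hom}_\La(-,\Om^{k-j}X)$ when $1\le j\le k$. This dictionary rewrites every stable-$\Hom$ term of the target as an $\Ext^1$ of a syzygy and every $\Ext^j$ term as an $\Ext$ of a syzygy of lower order.

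Next I would invoke the horseshoe lemma to produce, for each $k\ge 0$, compatible short exact sequences $0\to\Om^k X_2\to\Om^k X_1\to\Om^k X_0\to 0$ (with $\Om^0=\mathrm{id}$ the original sequence), well defined up to projective summands. Applying $\Hom_\La(Y,-)$ to the depth-$k$ sequence yields an honest long exact sequence, and under the dictionary above its $\Ext^{\ge 1}$-part becomes precisely a window of the asserted sequence: the genuine-$\Hom$ terms ($j=0$) sit at the extreme left, while for $j\ge 1$ the terms and connecting maps go over verbatim to those of the target. Since exactness is a local condition, exactness of the target at any fixed position follows by choosing $k$ large enough that this position falls inside the $\Ext^{\ge 1}$-range of the depth-$k$ long exact sequence, where no genuine-$\Hom$ term intrudes; letting $k\to\infty$ accounts for all positions, including the terms extending indefinitely to the left.

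The main obstacle is the bookkeeping in this last step: one must check that the isomorphisms $\underline{\Hom}_\La(-,\Om^{k-1}X_i)\cong\Ext^1_\La(-,\Om^k X_i)$ together with the dimension-shift isomorphisms intertwine the connecting homomorphism of the depth-$(k{+}1)$ sequence with that of the depth-$k$ sequence, so that the separate windows genuinely assemble into one complex carrying the stated maps, rather than merely matching term by term. This is a diagram chase whose only real subtlety is that syzygies are functorial only up to projective summands, so it should be carried out in the stable category (equivalently, one verifies that each construction is insensitive to adding projective direct summands); once this is done, naturality in the displayed variable is automatic and the proof is complete.
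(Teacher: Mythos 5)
Your argument breaks down at its very first step, and the failure is fatal to the whole strategy. You justify the ``dictionary'' $\underline{\Hom}_\La(Y,X)\cong\Ext^1_\La(Y,\Om X)$ by applying $\Hom_\La(Y,-)$ to $0 \rt \Om X \rt P \rt X \rt 0$ and ``using $\Ext^1_\La(Y,P)=0$.'' But projectivity of $P$ kills $\Ext^1_\La(P,-)$, not $\Ext^1_\La(-,P)$; the vanishing $\Ext^1_\La(Y,P)=0$ for all $Y$ holds precisely when $P$ is \emph{injective}, i.e.\ your lemma is valid essentially only over self-injective algebras, whereas the proposition is stated (and is used in the paper) for arbitrary artin algebras. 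What is true in general is only that the connecting map induces a natural \emph{monomorphism} $\underline{\Hom}_\La(Y,X)\hookrightarrow\Ext^1_\La(Y,\Om X)$ whose image is $\Ker\bigl(\Ext^1_\La(Y,\Om X)\rt\Ext^1_\La(Y,P)\bigr)$. A concrete counterexample: for $\La=k\langle x,y\rangle/(x,y)^2$ and $X=Y=k$ one has $\Om k=k^2$, $\underline{\Hom}_\La(k,k)\cong k$, but $\Ext^1_\La(k,\Om k)\cong k^4$. The same error propagates into your ``usual dimension-shift isomorphisms'' $\Ext^{j}_\La(-,\Om Z)\cong\Ext^{j-1}_\La(-,Z)$: shifting syzygies in the second variable again requires $\Ext^{\geq 1}_\La(-,P)=0$; the legitimate shift $\Ext^{j}_\La(\Om^k Y,Z)\cong\Ext^{j+k}_\La(Y,Z)$ lives in the \emph{first} variable, which is not the one displayed here. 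So your windows do not compute the stated functors at all: the $\Ext^{\geq 1}$ portion of the target is just the ordinary long exact sequence and never needed any shifting, while the stable-$\Hom$ continuation to the left --- the entire content of the proposition --- is exactly where the false identification is invoked, and your careful bookkeeping in the last paragraph cannot repair it.

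For the record, the paper offers no proof of this statement; it is quoted from Auslander--Reiten \cite{AR}, where the argument is genuinely different from yours. The standard route is to \emph{rotate} the sequence rather than shift dimensions: pulling back $X_1 \rt X_0$ along a projective cover $P \rt X_0$ yields a short exact sequence $0 \rt \Om X_0 \rt X_2\oplus P \rt X_1 \rt 0$ (the pullback splits over $P$), and iterating produces the leftward continuation $0 \rt \Om X_1 \rt \Om X_0\oplus Q \rt X_2\oplus P \rt 0$, etc. One then checks directly that the connecting maps descend to stable $\Hom$ (a morphism into $X_0$ factoring through a projective lifts along the epimorphism $X_1\rt X_0$, hence dies under the connecting map) and verifies exactness at the splice points by hand; $\underline{\Hom}$ is not a homological functor one can feed into a mechanical dimension-shifting argument, which is precisely why the low-degree terms of the sequence are stable Homs and not Ext groups of syzygies. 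If you reorganize your proof around these rotated sequences, your overall inductive framework can be salvaged; as written, the proof is incorrect.
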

Now we are ready to state and prove our main theorem in this section.
\begin{theorem}\label{NresolvingGP}
Let $\La$ be a Gorenstein algebra and  $\CC$ be an $n\Z$-cluster tilting subcategory of $\mmod\La$. Then  $\nGprj\CC$ is an $n$-resolving subcategory of $\CC$.
\end{theorem}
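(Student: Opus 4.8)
The plan is to verify, one at a time, the three defining conditions of Definition \ref{Resolving} for the subcategory $\CM = \nGprj\CC$ of $\CC$, the first two of which are already essentially in hand. Condition (i) is precisely the inclusion $\prj\La \subseteq \nGprj\CC$ recorded in Proposition \ref{Prop 3.6}(i). For condition (ii), given an epimorphism $M \rt M'$ with $M, M' \in \nGprj\CC$, Lemma \ref{Condition(ii)} supplies an exact sequence $0 \rt X^1 \rt \cdots \rt X^n \rt M \rt M' \rt 0$ all of whose terms lie in $\nGprj\CC$; since $\nGprj\CC$ is an $n\Z$-cluster tilting subcategory of $\Gprj\La$ by Theorem \ref{NClusterTilting}, any such acyclic sequence with terms in $\nGprj\CC$ is $n$-exact, so (ii) holds. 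The real content is therefore condition (iii).

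Fix an exact sequence $\xi : 0 \rt M' \rt A^1 \rt \cdots \rt A^n \rt M \rt 0$ in $\mmod\La$ with $M, M' \in \nGprj\CC$; by the remark following Definition \ref{Resolving} we may also arrange that every $A^i$ lies in $\CC$. My first step is to pass to the functor category $\mmod\CC$ via the restricted Yoneda functor $B \mapsto (-,B) = \Hom_\La(-,B)\vert_\CC$. Because $\CC$ is $n\Z$-cluster tilting, the long exact sequence of condition (d) of the characterization recalled in Section \ref{section2} applies to $\xi$ and shows that
\[0 \rt (-,M') \rt (-,A^1) \rt \cdots \rt (-,A^n) \rt (-,M) \rt F \rt 0\]
is exact in $\mmod\CC$, where $F$ is the cokernel of the last map and is a subfunctor of $\Ext^n_\La(-,M')$; in particular $F$ vanishes on $\prj\La$. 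As representable functors are projective in $\mmod\CC$, this is a projective resolution of $F$, and the Yoneda equivalence class of $\xi$ is encoded by $F$ together with the projection $(-,M) \twoheadrightarrow F$. It therefore suffices to produce a second projective resolution of $F$ of length $n+1$ whose interior terms are representables $(-,G^i)$ with $G^i \in \nGprj\CC$ and whose outermost terms are again $(-,M)$ and $(-,M')$: applying the fully faithful Yoneda functor in reverse then reads off an $n$-exact sequence $0 \rt M' \rt G^1 \rt \cdots \rt G^n \rt M \rt 0$ in $\nGprj\CC$ Yoneda equivalent to $\xi$.

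To manufacture such a resolution I would use that, since $F$ vanishes on $\prj\La$, Remark \ref{ProjPresenCoker}(ii) places $F$ and all of its syzygies in $\mmod\underline{\CC}$, where they are governed by the stable Hom functors $(-,\underline{B})$. The successive syzygies of $F$ are exactly the stable-functor cokernels attached to the short exact sequences into which $\xi$ breaks, and Proposition \ref{longexactStable} is the tool that presents each of them in the three-term form $0 \rt (-,\underline{A}) \rt (-,\underline{M}) \rt (-,\underline{M}'')$ required as input to Proposition \ref{PrjPresentation}. Feeding these in repeatedly yields, at each syzygy, a projective presentation whose terms are representables at objects of $\nGprj\CC$, and the explicit cokernel-of-a-morphism recipe of Remark \ref{ProjPresenCoker}(i) splices the presentations into a single resolution of $F$; any projective summands produced along the way are harmless because $\prj\La \subseteq \nGprj\CC$. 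Iterating down to the $(n+1)$-st step produces the desired resolution of $F$ with interior terms $(-,G^i)$, $G^i \in \nGprj\CC$, and last term $(-,M')$.

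The hard part will be the bookkeeping of the Yoneda equivalence rather than the mere existence of Gorenstein projective terms. One must check that the resolution assembled in this way resolves the same functor $F$, carries the same map at the right-hand end $(-,M)$, and has last syzygy exactly $(-,M')$ for the prescribed object $M'$, so that the resulting $n$-exact sequence genuinely shares both ends with $\xi$ and is Yoneda equivalent to it, rather than being an unrelated $n$-extension that merely happens to have Gorenstein projective middle terms. Controlling the length of the new resolution, padding by split summands when the one coming out of Proposition \ref{PrjPresentation} is shorter than $n+1$, and tracking the passage back and forth between the honest Hom and Ext functors and their stable versions through Propositions \ref{longexactStable} and \ref{PrjPresentation}, is where the care is concentrated. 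Once the resolution of $F$ with terms $(-,G^i)$ is in place, transporting it back through Yoneda and reading off the objects $G^i \in \nGprj\CC$ completes the verification of (iii), and with (i) and (ii) this proves the theorem.
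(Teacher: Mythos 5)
Your overall strategy for condition (iii) coincides with the paper's: both reduce to producing a projective resolution of length $n+1$, with all terms representables at objects of $\nGprj\CC$, of the functor $F=\im\big((-,\underline{M})\rt \Ext^n_{\La}(-,M')\big)$ encoding the Yoneda class of $\xi$ (the paper invokes \cite[A.1. Proposition]{I} for this reduction, padding the minimal resolution by split complexes induced by $M$ and $M'$, so you need not, and should not, insist that the new resolution literally has ends $(-,M)$ and $(-,M')$). The genuine gap is in your mechanism for obtaining the input to Proposition \ref{PrjPresentation}. You assert that ``the successive syzygies of $F$ are exactly the stable-functor cokernels attached to the short exact sequences into which $\xi$ breaks'' and that Proposition \ref{longexactStable} puts each of them in the form $0 \rt (-,\underline{A}) \rt (-,\underline{M}) \rt (-,\underline{M}'')$ with $M,M''\in\nGprj\CC$. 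This fails as stated: breaking $\xi$ (or its standard representative $fE$) yields the sequences $\epsilon_i: 0 \rt \Om^i(M) \rt P^{i-1} \rt \Om^{i-1}(M) \rt 0$ and $\delta: 0 \rt M' \rt U \rt \Om^{n-1}(M) \rt 0$, whose terms $U$ and $\Om^i(M)$ are in general not even in $\CC$, so applying Proposition \ref{longexactStable} to them never produces stable representables at objects of $\nGprj\CC$. The heart of the paper's proof is precisely to manufacture the required three-term sequence from data \emph{not} contained in $\xi$: since $\Om^{n-1}(M)$ is Gorenstein projective, $\delta$ is $\Hom_{\La}(-,\prj\La)$-exact and can be compared with the $n\Z$-totally acyclic complex of $M'$, giving a three-row diagram with new objects $W$ and $N$ such that $U\simeq \Om^{n-1}(W)$ stably, whence $\Ker\big((-,\underline{M})\rt F\big)\simeq (-,\underline{W})$; then Proposition \ref{longexactStable} applied to the \emph{column} $0 \rt N \rt W \rt M \rt 0$, combined with the complete-resolution step $0 \rt N \rt Q \rt M'' \rt 0$ (which gives $(-,\underline{N})=0$ and $\Ext^1_{\La}(-,N)\simeq (-,\underline{M}'')$, with $M''\in\nGprj\CC$ exactly because it is a cosyzygy at a multiple of $n$ in the complete resolution of $M'$), exhibits $(-,\underline{W})$ as the kernel of a map $(-,\underline{M})\rt (-,\underline{M}'')$. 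None of this construction --- the lifting diagram, the module $W$, or the provenance of $M''$ --- appears in your outline, and without it Proposition \ref{PrjPresentation} cannot be applied even once.

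Your plan to then iterate Proposition \ref{PrjPresentation} ``at each syzygy'' down to the $(n+1)$-st step is also both unjustified and unnecessary: you would have to re-establish at every stage that the current syzygy is a kernel of a map between stable representables at objects of $\nGprj\CC$, and you give no mechanism for that. The paper avoids any iteration: a single application of Proposition \ref{PrjPresentation} together with Remark \ref{ProjPresenCoker} yields a presentation $(-,C_1)\rt (-,C_0)\rt F\rt 0$ with $C_0,C_1\in\nGprj\CC$; since $F(\La)=0$, the underlying map $C_1\rt C_0$ is an epimorphism of modules, and one application of Lemma \ref{Condition(ii)} --- the same lemma you use for condition (ii) --- completes it in one stroke to an exact sequence $0 \rt C_{n+1}\rt \cdots \rt C_1 \rt C_0 \rt 0$ with all terms in $\nGprj\CC$, hence to the desired resolution of $F$. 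Your verifications of conditions (i) and (ii) agree with the paper and are fine.
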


\begin{proof}
By Proposition \ref{Prop 3.6}, $\nGprj\CC$ contains projectives, so condition $(i)$ of Definition \ref{Resolving} satisfies. Condition $(ii)$ follows directly from Lemma \ref{Condition(ii)}. To complete the proof, only the third condition of the definition should be investigated for $\nGprj\CC$.
Assume that exact sequence $$\epsilon: \ \ 0 \rt M' \rt A^1 \rt \cdots \rt A^{n} \rt M \rt 0,$$
with $M$ and $M'$ in $\nGprj\CC$ is given. Then, by \cite[Appendix of Chapeter VII]{M}, we observe that $\epsilon $ is Yoneda equivalent to $fE$, for some $f:\Om^n(M) \rt M',$  where $E$ denotes the following exact sequence
$$E: \ \ 0 \rt \Om^{n}(M)  \rt P^{n-1} \rt \cdots \rt P^1 \rt P^0 \rt M \rt 0.$$
In fact, $fE$ is just the following $n$-fold extension of $M'$ by $M$
$$fE: \ \ 0 \rt M' \rt U \rt P^{n-2} \rt \cdots \rt P^1 \rt P^0 \rt M \rt 0,$$
where $0 \rt M' \rt U \rt \Om^{n-1}(M)\rt 0$ is obtained as the  push-out of the short exact sequence $0 \rt \Om^n(M)\rt P^{n-1} \rt \Om^{n-1}(M)\rt 0$ along the morphism $f$.
The exact sequence $fE$ can be broken into the following short exact sequences
$$1 \leqslant i \leqslant n-1, \ \  \epsilon_i: \ 0 \rt \Om^i(M) \rt P^{i-1} \rt \Om^{i-1}(M) \rt 0,$$
$$ \ \ \  \  \ \  \  \ \ \  \ \ \  \ \ \ \ \ \  \ \  \ \ \delta:  \ 0 \rt M' \rt U \rt \Om^{n-1}(M) \rt 0,$$
where $\Om^0(M)=M$. Applying  the functor $\Hom_{\La}(C, -)$, for each $C \in \CC$, to the short exact sequences $\epsilon_i$, and looking at the corresponding long exact sequences, we can deduce the following isomorphisms of functors
$$   (-,\underline{M} ) \simeq \Ext^1_{\La}(-, \Om(M))\simeq \cdots \simeq \Ext^{n-1}_{\La}(-, \Om^{n-1}(M))   \ \ \ \ \ \ \ \ \ \  \  \ \ \ \dagger $$
in $\mmod \CC$.
On the other hand, the short exact sequence $(\delta)$ similarly induces the following exact sequence
$$ 0 \rt \Ext^{n-1}_{\La}(-, U) \rt \Ext^{n-1}_{\La}(-, \Om^{n-1}(M))\rt \Ext^n_{\La}(-, M') \ \ \ \ \ \ \ \ \ \   \dagger\dagger   $$
in $\mmod \CC.$ By combining $(\dagger)$ and $(\dagger \dagger)$, we obtain the following exact sequence in $\mmod \CC$
$$0 \rt \Ext_{\La}^{n-1}(-, U) \rt (-, \underline{M}) \st{\Phi} \rt \Ext^n_{\La}(-, M') \ \ \ \ \ \ \ \ \ \ \ \ \ \ \ \ \ \ \ \ \ \ \ \ \ \ \ \ \ \dagger\dagger\dagger $$
such that $\Phi_{\underline{M}}(\rm{Id}_{\underline{M}})=[fE].$ Let $F=\rm{Im} \Phi$.  According to \cite[A.1. Proposition]{I}, $fE$ and hence $\epsilon$ is Yoneda equivalent to an $n$-fold extension constructed by the minimal projective resolution of $F$  by adding, if necessary, the split exact complexes induced by $M$ and $M'$ to the minimal projective resolution. So to complete the proof of condition $(iii)$, we show that $F$ has a minimal projective resolution
 $$0\rt (-,C_{n+1})\rt\cdots\rt (-,C_0)\rt F\rt 0$$
 such that  for all $i\in\lbrace 0,...,n+1\rbrace$, $C_i\in\nGprj\CC$.

 Since $M\in \nGprj\CC$, $\Om^{n-1}(M) \in \Gprj\La$. This implies that  $\delta$ is $\Hom_{\La}(-, \prj\La)$-exact. Hence in  a straightforward way, the short exact sequence $\delta$ can be embedded to the following commutative diagram
$$\xymatrix@R=0.4cm@C=0.6cm{  & 0\ar[d] & 0\ar[d] &  & 0\ar[d] & 0\ar[d] & \\0 \ar[r] &
 		M'\ar[r]\ar[d] &
 		Q^{n-2}\ar[r]\ar[d] & \cdots\ar[r] &
 		Q^0\ar[r]\ar[d] & N\ar[d]\ar[r]& 0 \\0\ar[r] &
 		U\ar[r]\ar[d] & L^{n-2}\ar[r]\ar[d] & \cdots\ar[r] & L^0 \ar[r]^{f}\ar[d] & W\ar[d]\ar[r]& 0\\ 0\ar[r] &
 		\Om^{n-1}(M)\ar[r]\ar[d] & P^{n-2}\ar[r]\ar[d] & \cdots\ar[r] & P^0\ar[r]\ar[d] & M\ar[d]\ar[r] & 0\\  &
 		0 & 0 &  & 0 & 0 &  }$$
such that all the columns and rows are exact,  $L^i$ is projective, for all $i\in\lbrace 0,...,n-2\rbrace$, and also the  first row is obtained by the $n\Z$-totally acyclic complex  associated to $n\Z$-Gorenstein projective module $M'.$
In particular, by the above diagram we have $U \simeq \Om^{n-1}(W)$ in $\underline{\rm{mod}}\mbox{-}\La.$ Since $\CC \subseteq {}^{\perp_{n-1}}\prj\La$, by \cite[Subsection 2.1]{I1}, we have the following series of natural  isomorphisms of functors on $\CC$
\begin{align*}
\Ext^{n-1}_{\La}(-, U)&= \underline{\rm{Hom}}_{\La}(\Om^{n-1}(-),  U)\mid_{\CC}  \\&
\simeq \underline{\rm{Hom}}_{\La}(\Om^{n-1}(-),  \Om^{n-1}(W))\mid_{\CC}  \\
& \simeq \underline{\rm{Hom}}_{\La}(-, W)\mid_{\CC}.
\end{align*}
So by  $(\dagger\dagger\dagger)$, we have  the following short exact sequence
$$0\rt (-,\underline{W})\rt (-,\underline{M})\rt F\rt 0. $$
From now on, for simplicity, we write $(-,\underline{A})$ for  $\underline{\rm{Hom}}_{\La}(-, A)\mid_{\CC}$, and also delete $\mid_{\CC}$ when we consider  functors on $\CC.$

 We claim that $(-,\underline{W})$ is the kernel of the map $(-,\underline{M})\rt (-,\underline{M}'')$ with $M,M''\in\nGprj\CC$. To do this, consider the short exact sequence $0 \rt N \rt W \rt M \rt 0$, located at the right most column in the above diagram. By Proposition \ref{longexactStable},
 we get  the following long exact sequence of functors on $\mmod \La$
 	\[\begin{tikzcd}[column sep=tiny, row sep=tiny]
\cdots\rar&\underline{\rm{Hom}}_{\La}(-, \Om M)\rar&\underline{\Hom}_{\La}(-, N)\rar&\underline{\Hom}_{\La}(-,W)\rar &{}\\  {}\rar&\underline{\Hom}_\La(-,M)\rar&\Ext_{\La}^1(-,N)\rar&
\Ext_{\La}^1(,W)\rar& \Ext_{\La}^1(-,M)\rar&\cdots.
\end{tikzcd}\]	
 The restriction of the above diagram to the objects of $\CC$ gives us the following long exact sequence
 \[(-, \underline{\Om(M)})\rt(-,\underline{N})\rt(-,\underline{W})\rt(-, \underline{M})\rt\Ext^1_{\La}(-, N)\rt\Ext_{\La}^1(-, W)\rt0 \ \ \ddagger \]
 	in $\mmod \CC.$
 Since  $M' \in \nGprj\CC$, there is a short exact sequence $0 \rt N \rt Q \rt M''\rt 0$, in which $M'' \in \nGprj\CC$ and $Q \in \prj\La.$ This short exact sequence induces  the following exact sequence 	
 \[	0\rt(-,N)\rt(-,Q)\rt(-, M'')\rt\Ext^1_{\La}(-, N)\rt 0.	
 	\]
 So one can deduce  the isomorphism $\Ext^{1}_{\La}(-, N)\simeq (-, \underline{M}'')$ in $\mmod \CC$ and also $(-,\underline{N})=0$.
 Thus, by considering above facts, the long exact sequence $(\ddagger)$ can be written as
 	\[
 	0\rt (-,\underline{W})\rt (-, \underline{M})\rt (-, \underline{M}'')\rt \Ext_{\La}^1(-, W) \rt 0.
 	\]
In particular, $(-, \underline{W})$ is the kernel of the map $(-, \underline{M}) \rt (-, \underline{M}'')$ with $M, M'' \in \nGprj\CC,$ as it was claimed. By Lemma \ref{PrjPresentation}, there exists  a projective presentation $(-,X_1)\rt (-,X_0)\rt (-, \underline{W})\rt   0$ with $X_1,X_0\in\nGprj\CC$. So, by Remark \ref{ProjPresenCoker} and short exact sequence
$0\rt  (-, \underline{W})\rt (-, \underline{M}) \rt F \rt 0,$  we deduce that $F$ has a projective presentation $(-,C_1)\rt(-,C_0)\rt F\rt 0$ such that $C_0,C_1\in\nGprj\CC$.
Since $F(\La)=0$, the induced map $C_1\rt C^0$ is  an epimorphism. Now by Lemma \ref{Condition(ii)} for the epimorphism $C_1 \rt C_0$, we get an exact sequence
$$0 \rt C_{n+1} \rt \cdots C_2 \rt C_1 \rt C_0 \rt 0 $$
 with $C_i\in\nGprj\CC$, for all $i\in\lbrace 0,...,n+1\rbrace$. This, in turn,  implies the following projective resolution of $F$ in $\mmod \CC$
$$0 \rt (-, C_{n+1}) \rt \cdots (-, C_2) \rt (-, C_1) \rt (-, C_0) \rt F \rt 0 $$	
Consequently, the minimal projective resolution of $F$ in $\mmod \CC$ consists of only the objects in $\nGprj\CC.$ The proof is hence complete.
\end{proof}

\section{Observation}
Let $\La$ be an artin algebra. The quotient category $\D^b(\La) /\D^{\text{perf}}(\La)$,  where $\D^{b}(\La)$ denotes the bounded derived category of finitely generated $\La$-modules and  $\D^{\text{perf}}(\La)$ denotes the full triangulated subcategory of $\D^{b}(\La)$ consisting of objects isomorphic to bounded complexes of finitely generated projective $\La$-moules, is called the singularity category of $\La$, denoted by $\D_{\rm{sg}}(\La)$.

Buchweitz \cite{Bu}, Happel \cite{Hap} and Rickard \cite{Ri} independently have shown that $\D_{\rm{sg}}(\La)$ is triangule equivalence to $\underline{\Gprj}\La$, if $\La$ is a Gorenstein algebra. In the following, we provide an observation for  higher version of this known equivalence.

 Let $\La$ be a Gorenstein algebra,  $\Phi: \underline{\Gprj}\La \rt \D_{\rm{sg}}(\La)$ be the above  mentioned  equivalent  of triangulated categories and $\CC$ be  an $n\Z$-cluster tilting subcategory of $\mmod\La$.
 One can see that $\Phi$ maps $\nGprj\CC$ to the subcategory
 \[\Upsilon=\{P^{\bullet} \in\D_{\rm{sg}}(\La)\mid P^{\bullet}\simeq C[ni] \ \text{for some} \ C \in \CC \ \text{and} \ i \in \Z \},\]
 of $\D_{\rm{sg}}(\La)$.
Therefore, we have the following commutative diagram
\[ \xymatrix{  \underline{\Gprj}\La \ar[rr]^<<<<<<<<<<{\Phi} && \D_{\rm{sg}}(\La) \\
	\underline{\nGprj}\CC \ar@{^(->}[u] \ar[rr]^<<<<<<<<<{\Phi\mid} && \Upsilon  \ar@{^(->}[u] }\]
 with  equivalences in the rows. Since by Theorem \ref{NClusterTilting}, $\underline{\nGprj}\CC$ is an $n\Z$-cluster tilting subcategory  of $\underline{\Gprj}\La$,  the above diagram implies that  $\Upsilon$
 is $n\Z$-cluster tilting subcategory of $\D_{\rm{sg}}(\La)$ and hence by \cite[Theorem 1 ]{GKO}, $\Upsilon$ gets $(n+2)$-angulated structure. Moreover, since  by Corollary \ref{Nangulatedcategory}, $\underline{\nGprj}\CC$ is $(n+2)$-angulated category,  restriction functor $\Phi\mid$ is an equivalence of $(n+2)$-angulated categories. We point out that this  observation should be compared with a result of Kevamme \cite{K} and as mentioned there $\Upsilon$ can be considered as  a higher analogue of the singularity category.

 \section*{Acknowledgments}
The authors would like to express their special thanks to Professor Apostolos Beligiannis for suggesting this project and for several discussions he have had with the third author, during her visit at the University of Ioannina. Part of this work is done during the first author's visit at the University of Leicester. He would like to thank Professor Sibylle Schroll for her support, warm hospitality and excellent mathematical discussions.

\end{document}